\newtheorem{theorem}{Theorem}[section]
\newtheorem{lemma}[theorem]{Lemma}
\theoremstyle{definition}
\newtheorem{definition}[theorem]{Definition}
\newtheorem{remark}[theorem]{Remark}
\numberwithin{equation}{section}
\title[Orbital stability of standing waves]{Orbital stability of standing waves
for Schr\"{o}dinger type equations with slowly decaying linear
potential}
\author[Xinfu Li, Junying Zhao]{}
\thanks{Email Addresses:  lxylxf@tjcu.edu.cn (XL), lxyzhjy@tjcu.edu.cn (JZ)}
\begin{document}
\maketitle

\centerline{\scshape Xinfu Li and Junying Zhao}
\medskip
{\footnotesize
 \centerline{School of Science, Tianjin University of Commerce, Tianjin 300134,
 China}}

\bigskip

\begin{abstract}
In this paper, kinds of Schr\"{o}dinger type equations with slowly
decaying linear  potential and power type or convolution type
nonlinearities are considered. By using the concentration
compactness principle, the sharp Gagliardo-Nirenberg inequality and
a refined estimate of the linear operator, the existence and orbital
stability of standing waves in $L^2$-subcritical and $L^2$-critical
cases are established in a systematic way.

\medskip

\textbf{2010 Mathematics Subject Classification}: 35Q41, 35B35,
35J20.

\textbf{Keywords}: orbital stability, concentration compactness
principle, standing waves, slowly decaying potential,
Schr\"{o}dinger type equations.
\end{abstract}

\section{Introduction and main results}

\setcounter{section}{1} \setcounter{equation}{0}

In this paper, we consider the following  nonlinear Schr\"{o}dinger
type equations with slowly decaying linear potential
\begin{equation}\label{e1.1}
\begin{cases}
i\partial_tu+\Delta u+V(x)u+f(u)=0, \quad (t,x)\in
\mathbb{R}\times\mathbb{R}^{N},\\
u(0,x)=u_0(x)\in H^1(\mathbb{R}^N),\quad x\in \mathbb{R}^{N},
\end{cases}
\end{equation}
where $N\geq 1$, $u: \mathbb{R}\times\mathbb{R}^{N}\to \mathbb{C}$
is a complex valued function, $V(x)=\gamma|x|^{-\alpha}$, $\gamma\in
\mathbb{R}$, $0<\alpha<\min\{2,N\}$, and $f: \mathbb{C}\to
\mathbb{C}$ is assumed to be one of the following five types:

\textbf{Type 1}  $f(u)=|u|^{p-1}u$ with $1<p< (N+2)/(N-2)_+$, where
$1/(N-2)_+=1/(N-2)$ if $N\geq 3$ and $1/(N-2)_+=+\infty$ if $N=1,
2$;

\textbf{Type 2}  $f(u)=(I_\beta\ast|u|^q)|u|^{q-2}u$ with
$0<\beta<N$, $1+\beta/N<q< (N+\beta)/(N-2)_+$ and $I_{\beta}$ being
the Riesz potential defined for every $x\in \mathbb{R}^N \setminus
\{0\}$ by
\begin{equation*}
I_{\beta}(x)=\frac{\Gamma(\frac{N-\beta}{2})}{\Gamma(\frac{\beta}{2})\pi^{N/2}2^\beta|x|^{N-\beta}}
\end{equation*}
with $\Gamma$ denoting the Gamma function (see \cite{Riesz1949AM},
P.19);

\textbf{Type 3}  $f(u)=|u|^{p_1-1}u+|u|^{p_2-1}u$ with $1<p_1<p_2<
(N+2)/(N-2)_+$;

\textbf{Type 4}
$f(u)=(I_\beta\ast|u|^{q_1})|u|^{{q_1}-2}u+(I_\beta\ast|u|^{q_2})|u|^{{q_2}-2}u$
with $0<\beta<N$ and $1+\beta/N<q_1<q_2< (N+\beta)/(N-2)_+$;

\textbf{Type 5} $f(u)=(I_\beta\ast|u|^{q})|u|^{{q}-2}u+|u|^{p-1}u$
with $0<\beta<N$, $1+\beta/N<q< (N+\beta)/(N-2)_+$ and $1<p<
(N+2)/(N-2)_+$.

The Schr\"{o}dinger operator $i\partial_t+\Delta$ arises in various
physical contexts such as nonlinear optics and plasma physics, see
\cite{Chiao-Garmire-Townes-1964,Sulem-Sulem 2007,Zakharov-1972}. The
nonlinearity enters due to the effect of changes in the field
intensity on the wave propagation characteristics of the medium. The
potential $V(x)$ can be thought of as modeling inhomogeneities in
the medium. In particular, the operator $-\Delta-\frac{\gamma}{|x|}$
with Coulomb potential  provides a quantum mechanical description of
the Coulomb force between two charged particles and corresponds to
having an external attractive long-range potential due to the
presence of a positively charged atomic nucleus, see
\cite{Messiah-1961,Series-1957}. For the study of Schr\"{o}dinger
equation with a harmonic potential, we refer the readers to Zhang
\cite{Zhang JSP 2000, Zhang CPDE 2005}.

By a standing wave, we mean a solution of (\ref{e1.1}) in the form
$u(t,x)=e^{i\omega t}\varphi(x)$, where $\omega\in \mathbb{R}$ is a
constant and   $\varphi\in H^1(\mathbb{R}^N)$ satisfies the
stationary equation
\begin{equation}\label{e1.2}
-\Delta
\varphi-\frac{\gamma}{|x|^\alpha}\varphi+\omega\varphi=f(\varphi),\quad
x\in \mathbb{R}^N.
\end{equation}
For $f(u)$ is of Type 1, the existence of  ground state to
(\ref{e1.2}) was studied by  Fukaya and Ohta \cite{Fukaya-Ohta-2018}
and Fukuizumi and Ohta \cite{Fukuizumi-Ohta-2003}, and the
uniqueness of the positive radial solution with $\alpha=1$ was
studied by Benguria and Jeanneret \cite{Benguria-Jeanneret 1986}.
For $f(u)$ is of Type 2, the nonexistence, existence and uniqueness
of positive solution to (\ref{e1.2}) were studied in
\cite{{Cheng-Yang 2018},{Lieb-Simon 1977},{Lions 1981},{Lions
1984}}.

When $\gamma=0$ and  $f(u)$ is of Type 1, (\ref{e1.1}) is invariant
under the scaling transform
\begin{equation}\label{e1.5}
u(t,x)\to \lambda^{\frac{2}{p-1}}u(\lambda^2t,\lambda x).
\end{equation}
If $p =p_c:=1+ {4}/{N}$, the transform keeps the mass invariant and
the nonlinearity is called $L^2$-critical. In this case, Cazenave
\cite{Cazenave 2003} proved that the ground state solution of
(\ref{e1.2}) is orbitally stable for all $\omega>0$ if $p <p_c$,
while is unstable for all $\omega>0$ if $p_c<p<(N+2)/(N-2)$. The
instability of the bound state solution with  $p=p_c$ was proved by
Weinstein \cite{Weinstein 1983}. When $f(u)$ is  of Type 2, the
transform
\begin{equation}\label{e1.4}
u(t,x)\to \lambda^{\frac{\beta+2}{2q-2}}u(\lambda^2 t,\lambda x)
\end{equation}
keeps (\ref{e1.1}) invariant and  $q =q_c:= 1+ (2+\beta)/{N}$ is the
$L^2$-critical exponent. In this case, Cazenave and Lions
\cite{Cazenave-Lions 1982} showed the existence and orbital
stability of standing waves for $N=3,\ q=2$ and $\beta=2$. Recently,
in the $L^2$-subcritical case, that is, $1 +\beta/N < q < 1
+(2+\beta)/{N}$, Wang et al. \cite{Wang-Sun-Lv 2016} studied the
orbital stability of standing waves to (\ref{e1.1}). When $f(u)$ is
of Type 3,  we refer to \cite{{Fukuizumi 2003},{Soave 2018}} for the
existence, orbital stability and strong instability of standing
waves and \cite{{Bellazzini-Siciliano 2011},{Chen-Guo 2007},{Kikuchi
2007},{Liu-Shi 2018}} for that of Type 5.

When $\gamma\neq0$, it is easy to see that equation (\ref{e1.1})
does not enjoy the scaling invariance as well as the pace
translation invariance. In this case, when $f(u)$ is of Type 1,
Fukuizumi and Ohta
 studied the existence and orbital stability of
standing waves for $1<p<1+4/N$ in \cite{Fukuizumi-Ohta-2003} and the
strong instability of standing waves for $\gamma>0$ and
$1+4/N<p<(N+2)/(N-2)$ was studied by Fukaya and Ohta in
\cite{Fukaya-Ohta-2018}. The authors in \cite{{Cazenave-1989},{Dinh
2018 power},{Guo et al.2018},{Miao-Zhang-Zheng-2018}} studied the
local well-posedness, global existence, scattering and finite time
blowup of the solutions to (\ref{e1.1}). When $f(u)$ is of Type 2,
if $\beta=2$, $q=2$, $\alpha=1$ and $\gamma>0$, Ginibre and Velo
\cite{Ginibre-Velo 1980} obtained the global existence of the
solution to (\ref{e1.1}). If $N=3$ and $\beta=2$, Cazenave and Lions
\cite{Cazenave-Lions 1982} and Lions \cite{Lions 1984} established
the existence and orbital stability of standing waves for $q=2$, and
Cheng and Yang \cite{Cheng-Yang 2018} studied that for $q>2$ and
close to 2. More precisely, in (\ref{e1.2}), taking $\omega$ as a
fixed parameter, it is known that every solution $v\in
H^1(\mathbb{R}^N)$ to (\ref{e1.2}) is a critical point of the
functional $S_\omega$ defined by
\begin{equation*}
S_\omega(v):=\frac{1}{2}\int_{\mathbb{R}^N}\left(|\nabla
v|^2-\frac{\gamma}{|x|^\alpha}|v|^2+\omega|v|^2\right)dx-\int_{\mathbb{R}^N}F(v)dx,
\end{equation*}
where $F(s)=\int_0^s f(\tau) d\tau$. Denote by
$\mathcal{M_\omega^G}$ the set of all non-negative minimizers for
\begin{equation*}
\inf\{S_\omega(v):v\in H^1(\mathbb{R}^N)\backslash\{0\},\ v
\mathrm{\ is\ a\  solution\ of \ (\ref{e1.2})}\}.
\end{equation*}
For $f(u)$ is of  Type 1, $1<p<1+4/N$ and $\gamma\in \mathbb{R}$,
\cite{Fukuizumi-Ohta-2003} proved that there exist $\omega_0>0$ and
$\omega_*>\omega_0$ such that $\mathcal{M_\omega^G}$ is not empty
for any $\omega\in(\omega_0,\infty)$ and $e^{i\omega
t}\varphi_\omega(x)$ is orbitally stable for any
$\varphi_\omega(x)\in \mathcal{M_\omega^G}$ and
$\omega\in(\omega_*,\infty)$, by using a sufficient condition for
orbital stability, that is, the positive definite of the operator
$S''_\omega(\varphi_\omega)$. Similar results were obtained by
\cite{Cheng-Yang 2018} for Type 2 nonlinearities, $N=3$, $\beta=2$,
$q>2$ close to 2. Note that $\omega_0$ and $\omega_*$ are not given
explicitly, and moreover, they did not consider the $L^2$-critical
case. Hence, in this paper, we further discuss the existence and
orbital stability of standing waves of (\ref{e1.1}).

Note that  we may take $\omega$ as unknown in (\ref{e1.2}). Indeed,
for any $\rho>0$, if we define
\begin{equation*}
E(u)=\frac{1}{2}\int_{\mathbb{R}^N}\left(|\nabla
u|^2-\frac{\gamma}{|x|^\alpha}|u|^2\right)dx-\int_{\mathbb{R}^N}F(u)dx,
\end{equation*}
\begin{equation}\label{e1.3}
A_\rho=\inf_{u\in M_\rho}E(u),\ M_\rho=\{u\in
H^1(\mathbb{R}^N):\|u\|_2^2=\rho\}
\end{equation}
and
\begin{equation*}
G_\rho=\{u\in H^1(\mathbb{R}^N): u\in M_\rho,\ E(u)=A_\rho\}.
\end{equation*}
Then the Lagrange multiplier theorem implies that for any $u\in
G_\rho$, there exists $\lambda\in \mathbb{R}$ such that
\begin{equation*}
-\Delta u-\frac{\gamma}{|x|^\alpha}u-f(u)=-\lambda u.
\end{equation*}
Hence, $e^{i\lambda t}u(x)$ is a standing wave to (\ref{e1.1}) with
initial data $u_0(x)=u(x)$. One usually calls $e^{i\lambda t}u(x)$
the orbit of $u$. Moreover, if $u\in G_\rho$, then
$e^{i\theta}u(x)\in G_\rho$ for any $\theta\in \mathbb{R}$. In this
paper, we consider the orbital stability of  $G_\rho$. For this, we
give the following definition of orbital stability which is similar
to that in \cite{Trachanas-Zographopoulos-2015}.

\begin{definition}\label{def1.1}
The set $G_\rho$ is said to be orbitally stable if, for any
$\epsilon>0$, there exists $\delta>0$ such that for any initial data
$u_0$ satisfying
\begin{equation*}
\inf_{v\in G_\rho}\|u_0-v\|_{H^1(\mathbb{R}^N)}<\delta,
\end{equation*}
the corresponding solution $u$ to (\ref{e1.1}) satisfies
\begin{equation*}
\inf_{v\in G_\rho}\|u(t)-v\|_{H^1(\mathbb{R}^N)}<\epsilon
\end{equation*}
for all $t\geq 0$.
\end{definition}

\begin{remark}\label{rek1.1}
(1) Note that our definition of orbital stability is different from
that in \cite{Fukuizumi-Ohta-2003} for we do not know whether or not
$\mathcal{M_\omega^G}$ and $G_\rho$ are single point sets.

(2)  For the lack of scaling invariance, there is not direct
connection between $\mathcal{M_\omega^G}$ and $G_\rho$. However, for
equation
\begin{equation*}
\begin{cases}
i\partial_tu+\Delta u+(I_\beta\ast|u|^q)|u|^{q-2}u=0, \quad (t,x)\in
\mathbb{R}\times\mathbb{R}^{N},\\
u(0,x)=u_0(x)\in H^1(\mathbb{R}^N),\quad x\in \mathbb{R}^{N},
\end{cases}
\end{equation*}
there is some equivalence between $\mathcal{M_\omega^G}$ and
$G_\rho$, see \cite{Wang-Sun-Lv 2016}.
\end{remark}

To study the orbital stability of standing waves of (\ref{e1.1}), we
first make the following assumption on the local well-posedness.

\medskip

\textbf{Assumption A.} Let $N\geq 1$, $V(x)=\gamma|x|^{-\alpha}$,
$\gamma\in \mathbb{R}$, $0<\alpha<\min\{2,N\}$ and  $f(u)$ be one of
Types 1-5 such that the following local well-posedness holds for
(\ref{e1.1}):

For any $u_0\in H^1(\mathbb{R}^N)$, there exist $T_*,\ T^{*}\in
(0,\infty]$ and a unique solution $u(t)\in
C((-T_*,T^*),H^1(\mathbb{R}^N))\cap
C^{1}((-T_*,T^*),H^{-1}(\mathbb{R}^N))$ with $u(0)=u_0$ to
(\ref{e1.1}). If $T^*<\infty$ ($T_*<\infty$), then $\lim_{t\uparrow
T^*}\|u(t,x)\|_{H^1(\mathbb{R}^N)}=\infty$
($\lim_{t\downarrow-T_*}\|u(t,x)\|_{H^1(\mathbb{R}^N)}=\infty$).
Moreover, there is conservation of mass and energy, i.e.,
\begin{equation*}
\|u(t)\|_2^2=\|u_0\|_2^2, \ E(u(t))=E(u_0),  \ t\in(-T_*,T^*).
\end{equation*}

\begin{remark}\label{rmk1.1}
Motivated by \cite{Cazenave 2003}, \cite{Dinh 2018 power} and
\cite{Feng-Yuan 2015}, in Section 2 we give a proof for Assumption A
when $q\geq 2$.
\end{remark}

Under Assumption A, by using the concentration compactness
principle, the sharp Gagliardo-Nirenberg inequality and a refined
estimate of the linear operator $-\Delta-\gamma|x|^{-\alpha}$, we
can obtain the following theorems.

\begin{theorem}\label{thm1.1}
Let $N\geq 1$, $0<\alpha<\min\{2,N\}$,  $f(u)=|u|^{p-1}u$, $V(x)$
and $f(u)$ satisfy Assumption A.  Assume one of the following
conditions hold:

(1) $\gamma>0$, $1<p<1+4/N$, $\rho>0$;

(2) $\gamma>0$, $p=1+4/N$, $0<\rho<\|Q_p\|_2^2$, where $Q_p$ is the
unique positive radial solution of equation
\begin{equation*}
-\Delta Q+Q=Q^{p};
\end{equation*}
then $G_\rho$ is not empty and orbitally stable.
\end{theorem}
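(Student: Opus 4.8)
The plan is to realize $G_\rho$ as the set of minimizers produced by Lions' concentration compactness principle applied to $A_\rho=\inf_{M_\rho}E$, and then to deduce orbital stability from the relative compactness of minimizing sequences together with the conservation laws in Assumption A. The first step is to show that $E$ is coercive and bounded below on $M_\rho$, so that $A_\rho>-\infty$ and every minimizing sequence is bounded in $H^1(\mathbb{R}^N)$. The two nonquadratic terms are controlled by the sharp Gagliardo--Nirenberg inequality $\|u\|_{p+1}^{p+1}\le C\|\nabla u\|_2^{N(p-1)/2}\|u\|_2^{(p+1)-N(p-1)/2}$ and by the refined estimate for the linear operator, $\int_{\mathbb{R}^N}|x|^{-\alpha}|u|^2\,dx\le C\|\nabla u\|_2^{\alpha}\|u\|_2^{2-\alpha}$, valid for $0<\alpha<\min\{2,N\}$. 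Since $\alpha<2$ the potential term is subquadratic in $\|\nabla u\|_2$; in case (1) one also has $N(p-1)/2<2$, so $E(u)\ge\tfrac12\|\nabla u\|_2^2-C(\rho)\|\nabla u\|_2^{\max\{\alpha,\,N(p-1)/2\}}$ is coercive for every $\rho>0$. In case (2) one has $N(p-1)/2=2$, and the sharp constant gives $\int_{\mathbb{R}^N}F(u)\,dx\le\tfrac12(\rho/\|Q_p\|_2^2)^{2/N}\|\nabla u\|_2^2$, so the coefficient $\tfrac12\bigl(1-(\rho/\|Q_p\|_2^2)^{2/N}\bigr)$ of $\|\nabla u\|_2^2$ stays positive precisely when $\rho<\|Q_p\|_2^2$, which again yields coercivity.

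Next I would show $A_\rho<0$, the step where the attractive potential ($\gamma>0$) is essential. Fixing $\phi\in M_\rho$ and testing with the mass-preserving dilations $u_\lambda(x)=\lambda^{N/2}\phi(\lambda x)$ gives $E(u_\lambda)=\tfrac{\lambda^2}{2}\|\nabla\phi\|_2^2-\tfrac{\gamma\lambda^{\alpha}}{2}\int_{\mathbb{R}^N}|x|^{-\alpha}|\phi|^2\,dx-\tfrac{\lambda^{N(p-1)/2}}{p+1}\|\phi\|_{p+1}^{p+1}$; since $\gamma>0$ and $\alpha<2$, letting $\lambda\to0^+$ the negative $\lambda^{\alpha}$ term dominates and forces $E(u_\lambda)<0$ for small $\lambda$, in both cases (1) and (2). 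I would then record the problem at infinity $E^\infty(u)=\tfrac12\|\nabla u\|_2^2-\int_{\mathbb{R}^N}F(u)\,dx$, $A^\infty_\rho=\inf_{M_\rho}E^\infty$ (so $A_\rho\le A^\infty_\rho$ because $\gamma>0$), and prove the strict binding inequalities
\[A_\rho<A_{\rho_1}+A^\infty_{\rho-\rho_1}\qquad\text{for all }0<\rho_1<\rho.\]
In case (1), $A^\infty_\rho=-C\rho^{\sigma}$ with $\sigma>1$ is strictly subadditive, while in case (2) $A^\infty_\rho=0$ for $\rho<\|Q_p\|_2^2$, so the inequality reduces to the strict monotonicity $A_\rho<A_{\rho_1}$; in both cases strictness is produced by the energy gained when a piece of mass is pulled toward the origin against the attractive potential. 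I expect this family of binding inequalities to be the main obstacle, since the lack of translation and scaling invariance means one cannot simply invoke homogeneity of $A_\rho$ and must instead argue by a cut-and-paste construction combined with the potential-induced strict gain.

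With these ingredients the concentration compactness principle applies to a minimizing sequence $\{u_n\}\subset M_\rho$: vanishing is excluded because it would force $\int_{\mathbb{R}^N}F(u_n)\,dx\to0$ and, via the refined estimate applied on balls centered at the origin, $\int_{\mathbb{R}^N}|x|^{-\alpha}|u_n|^2\,dx\to0$, whence $\liminf E(u_n)\ge0>A_\rho$; dichotomy is excluded by the binding inequality, since a split of the mass would yield $A_\rho\ge A_{\rho_1}+A^\infty_{\rho-\rho_1}$. Hence $\{u_n\}$ is compact in $L^2$, and the strict inequality $A_\rho<A^\infty_\rho$ prevents the mass from drifting to infinity, so no translation is needed and $u_n\to u$ strongly in $L^2$; coercivity and weak lower semicontinuity then give $u\in M_\rho$ with $E(u)\le A_\rho$, so $u\in G_\rho$ and, since all terms of $E$ converge, the convergence is strong in $H^1$. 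This proves $G_\rho\neq\emptyset$ and, more importantly, that every minimizing sequence is relatively compact in $H^1$ up to the phase symmetry $u\mapsto e^{i\theta}u$.

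Finally, orbital stability follows by the standard contradiction argument. If $G_\rho$ were not stable there would exist $\epsilon_0>0$, initial data $u_{0,n}$ with $\inf_{v\in G_\rho}\|u_{0,n}-v\|_{H^1}\to0$, and times $t_n$ with $\inf_{v\in G_\rho}\|u_n(t_n)-v\|_{H^1}\ge\epsilon_0$. Conservation of mass and energy from Assumption A gives $\|u_n(t_n)\|_2^2\to\rho$ and $E(u_n(t_n))\to A_\rho$, so after a harmless normalization $u_n(t_n)$ is a minimizing sequence for $A_\rho$; by the compactness just established a subsequence converges in $H^1$ to some $v\in G_\rho$, contradicting the lower bound $\epsilon_0$. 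This yields the orbital stability of $G_\rho$ in both cases.
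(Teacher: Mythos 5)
Your skeleton is the same as the paper's: characterize $G_\rho$ via $A_\rho=\inf_{M_\rho}E$, run Lions' concentration compactness (vanishing killed by $A_\rho<0$, dichotomy killed by a strict binding inequality), conclude compactness of minimizing sequences in $H^1$, and prove stability by the contradiction argument with conservation laws, including the mass renormalization $\tilde u_n=\sqrt{\rho}\,u_n(t_n)/\|u_n(t_n)\|_2$ that the paper carries out explicitly. Two of your local choices are sound and even improve on the paper. Your homogeneous estimate $\int_{\mathbb{R}^N}|x|^{-\alpha}|u|^2dx\le C\|\nabla u\|_2^{\alpha}\|u\|_2^{2-\alpha}$ is valid for $0<\alpha<\min\{2,N\}$ and plays exactly the role of the paper's Lemma \ref{lem2.2}. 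More interestingly, your proof that $A_\rho<0$ in the critical case $p=1+4/N$ (let the attractive $\lambda^{\alpha}$ term dominate as $\lambda\to0^+$, since $\alpha<2$) is the correct argument: the paper instead asks for $\varphi\in M_\rho$ with $\frac12\|\nabla\varphi\|_2^2<\frac{1}{p+1}\|\varphi\|_{p+1}^{p+1}$, which is impossible when $\rho<\|Q_p\|_2^2$ by the sharp Gagliardo--Nirenberg inequality, so your version repairs a slip in Step 2 of the paper.

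The genuine soft spot is the strict binding inequality $A_\rho<A_{\rho_1}+A^{\infty}_{\rho-\rho_1}$. The paper obtains it from $A_\rho\le A_\eta+A^{\infty}_{\rho-\eta}$, the scaling bound $A_{\theta\eta}\le\theta A_\eta$ for $\theta>1$ (test with $\sqrt{\theta}\varphi_n$ and use $p>1$), and Lemma II.1 of Lions (1984). Your case (2) reduction to $A_\rho<A_{\rho_1}$ is exactly this scaling bound combined with $A_{\rho_1}<0$, so it is fine. In case (1), however, strict subadditivity of $A^{\infty}_\rho=-C\rho^{\sigma}$ alone does not yield the mixed inequality for every splitting: the natural chain $A_\rho\le(\rho/\rho_1)A_{\rho_1}$, $A_{\rho_1}\le A^{\infty}_{\rho_1}$, plus homogeneity only gives strictness when $\rho_1>\rho/2$, and your fallback ``cut-and-paste with potential-induced gain'' has a quantifier-ordering problem: the gain from parking the far piece at distance $R$ is of order $R^{-\alpha}$, while the truncation error of a near-minimizer of $A_{\rho_1}$ is small only beyond an uncontrolled radius depending on that error, so nothing guarantees the gain beats the error. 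Adopt the paper's scaling route, which avoids the construction entirely; note also that excluding dichotomy requires the case analysis on whether the translation centers $y_{n_k}$ stay bounded, so that the potential term splits correctly (the paper's (\ref{e3.22})--(\ref{e3.23})). Finally, make global existence explicit (the paper's Step 6): stability as defined requires the solution for all $t\ge0$, and this follows from your coercivity bound together with the blow-up alternative in Assumption A, using in case (2) that data near $G_\rho$ have $\|u_0\|_2<\|Q_p\|_2$.
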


\begin{theorem}\label{thm1.3}
Let $N\geq 1$, $0<\alpha<\min\{2,N\}$, $\beta\in(0,N)$,
$f(u)=(I_\beta\ast|u|^q)|u|^{q-2}u$, $V(x)$ and $f(u)$ satisfy
Assumption A. Assume one of the following conditions hold:

(1) $\gamma> 0$, $1+{\beta}/{N}<q<1+(2+\beta)/N$, $\rho>0$;

(2) $\gamma> 0$, $q=1+(2+\beta)/N$, $0<\rho<\|W_q\|_2^2$, where
$W_q$ is a radically ground state solution of the elliptic equation
\begin{equation*}
-\Delta W+W=(I_\beta\ast|W|^q)|W|^{q-2}W;
\end{equation*}
then $G_\rho$ is not empty and orbitally stable.
\end{theorem}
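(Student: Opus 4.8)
The plan is to realize $G_\rho$ as the set of minimizers of the constrained problem \eqref{e1.3} and to deduce orbital stability from the compactness of minimizing sequences together with the conservation laws in Assumption A, following the variational scheme of Cazenave--Lions.

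First I would show that $E$ is bounded below and coercive on $M_\rho$. Writing
\[
E(u)=\frac12\|\nabla u\|_2^2-\frac{\gamma}{2}\int_{\mathbb{R}^N}\frac{|u|^2}{|x|^\alpha}\,dx-\frac{1}{2q}\int_{\mathbb{R}^N}(I_\beta\ast|u|^q)|u|^q\,dx,
\]
I would control the potential term by the refined estimate of $-\Delta-\gamma|x|^{-\alpha}$ (yielding $\int|x|^{-\alpha}|u|^2\lesssim\|\nabla u\|_2^\alpha\|u\|_2^{2-\alpha}$ with $\alpha<2$) and the Choquard term by the Hardy--Littlewood--Sobolev inequality followed by the sharp Gagliardo--Nirenberg inequality, which produces a power $\|\nabla u\|_2^{\,N(q-1)-\beta}$ that is strictly below $2$ in case (1) and equal to $2$ in case (2). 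In the subcritical case coercivity is then immediate for every $\rho>0$; in the critical case $q=1+(2+\beta)/N$ the sharp constant is attained by $W_q$, so the condition $0<\rho<\|W_q\|_2^2$ is exactly what keeps the quadratic-in-$\|\nabla u\|_2$ term from overwhelming the kinetic energy, ensuring $A_\rho>-\infty$ and bounded minimizing sequences.

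Next I would run the concentration--compactness principle on a minimizing sequence $\{u_n\}\subset M_\rho$. Since $\gamma>0$ makes the potential attractive, I expect the strict binding inequality $A_\rho<A_\rho^\infty$, where $A_\rho^\infty$ is the infimum of the potential-free energy on $M_\rho$; this inequality, obtained by testing a near-optimizer of $A_\rho^\infty$ against the negative potential energy, is the engine of the argument. Exploiting the scaling of the potential-free problem one also gets the strict subadditivity $A_\rho<A_{\rho_1}+A_{\rho-\rho_1}^\infty$ for $0<\rho_1<\rho$, which excludes dichotomy; vanishing is ruled out because it forces both the potential and Choquard terms to tend to $0$, leaving $E(u_n)\to\tfrac12\|\nabla u_n\|_2^2\ge0$, contradicting $A_\rho<0$ (obtained from the scaling $u_\lambda(x)=\lambda^{N/2}u(\lambda x)$ as $\lambda\to0^+$, where the negative potential term of order $\lambda^\alpha$ dominates the kinetic term of order $\lambda^2$). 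Because the potential breaks translation invariance and pins the mass near the origin, no spatial shifts are required: the surviving compactness alternative gives $u_n\to u$ strongly in $L^2$, hence, by weak lower semicontinuity combined with $E(u_n)\to A_\rho$, strongly in $H^1$, so $u\in G_\rho$ and $G_\rho\neq\emptyset$.

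Finally, orbital stability follows by the standard contradiction argument. If it failed there would exist $\epsilon_0>0$, initial data $u_{0,n}$ with $\inf_{v\in G_\rho}\|u_{0,n}-v\|_{H^1}\to0$, and times $t_n$ with $\inf_{v\in G_\rho}\|u_n(t_n)-v\|_{H^1}\ge\epsilon_0$. Conservation of mass and energy gives $\|u_n(t_n)\|_2^2\to\rho$ and $E(u_n(t_n))\to A_\rho$, so after rescaling onto $M_\rho$ exactly (by a factor tending to $1$) the sequence $u_n(t_n)$ is minimizing for $A_\rho$; the compactness just established then forces a subsequence to converge in $H^1$ to an element of $G_\rho$, contradicting the lower bound $\epsilon_0$. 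I expect the main obstacle to be the strict binding inequality $A_\rho<A_\rho^\infty$ together with the associated subadditivity, since this is precisely where $\gamma>0$ and the refined linear-operator estimate must be used quantitatively, and in case (2) it must be reconciled with the sharp Hardy--Littlewood--Sobolev--Gagliardo--Nirenberg threshold $\|W_q\|_2^2$.
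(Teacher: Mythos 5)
Your plan coincides with the paper's proof in structure and in all main ingredients: constrained minimization of $E$ on $M_\rho$ \`a la Cazenave--Lions; coercivity via the refined potential estimate (the paper's Lemma \ref{lem2.2}, equivalent to your bound $\int |x|^{-\alpha}|u|^2\,dx\lesssim \|\nabla u\|_2^{\alpha}\|u\|_2^{2-\alpha}$ with $\alpha<2$) together with the sharp convolution Gagliardo--Nirenberg inequality, whose sharp constant $q\|W_q\|_2^{2-2q}$ is exactly what makes the threshold $\rho<\|W_q\|_2^2$ work in case (2); $A_\rho<0$ from the scaling $\lambda^{N/2}u(\lambda x)$ with the $\lambda^\alpha$ potential term dominating; exclusion of vanishing; strict subadditivity $A_\rho<A_\eta+A_{\rho-\eta}^\infty$ against dichotomy; the binding inequality $A_\rho<A_\rho^\infty$ to keep the translations bounded; and the same renormalization trick $\tilde u_n=\sqrt{\rho}\,u_n(t_n)/\|u_n(t_n)\|_2$ in the final contradiction argument. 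So the route is the same, but three steps where the paper does real work are glossed over or, as literally stated, would fail.

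First, for the Choquard nonlinearity the dichotomy exclusion is not reducible to the subadditivity inequality alone: since $I_\beta\ast$ is long-range, the disjointness of the supports of $u_{n_k}^{(1)}$ and $u_{n_k}^{(2)}$ does not make the energy split, and the cross term $\int_{\mathbb{R}^N}(I_\beta\ast|u_{n_k}^{(1)}|^q)|u_{n_k}^{(2)}|^q\,dx$ must be shown to vanish. The paper does this by the estimate $|x-y|^{-(N-\beta)}\leq d_{n_k}^{-\delta}|x-y|^{-(N-\beta-\delta)}$ on the product of the supports, followed by Hardy--Littlewood--Sobolev at the shifted exponent $\beta+\delta$ (possible since $1+\beta/N<q$ strictly); this is the main new technical point relative to the power case and your sketch omits it entirely. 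Second, your derivation of the binding inequality $A_\rho<A_\rho^\infty$ by ``testing a near-optimizer'' is circular as stated: to beat the error $\epsilon$ in $E^\infty(v)\leq A_\rho^\infty+\epsilon$ you need a lower bound on $\frac{\gamma}{2}\int|x|^{-\alpha}|v|^2\,dx$ that is uniform in $\epsilon$, but the near-optimizer $v$ is chosen after $\epsilon$ and may spread out. The paper fixes this by first proving (running Steps 1--4 with $\gamma=0$) that $A_\rho^\infty$ is \emph{attained} by some $v_\rho$ and then using $A_\rho\leq E(v_\rho)=A_\rho^\infty-\frac{\gamma}{2}\int|x|^{-\alpha}|v_\rho|^2\,dx<A_\rho^\infty$; alternatively one can translate a minimizing sequence so that a fixed amount of mass stays in a fixed ball, giving a uniform lower bound on the potential energy. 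Relatedly, ``no spatial shifts are required'' is not automatic: compactness in Lemma \ref{lem con-com} comes with translations $y_{n_k}$, and it is precisely the binding inequality that forces their boundedness. Third, Definition \ref{def1.1} requires the solution for all $t\geq 0$, so the stability argument needs global existence for data near $G_\rho$; the paper's Step 6 derives this from the conservation laws plus the same Gagliardo--Nirenberg bound (using $\|u_0\|_2<\|W_q\|_2$ in the critical case), and your proposal should include this step explicitly.
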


\begin{theorem}\label{thm1.2}
Let $N\geq 1$, $0<\alpha<\min\{2,N\}$,
$f(u)=|u|^{p_1-1}u+|u|^{p_2-1}u$, $V(x)$ and $f(u)$ satisfy
Assumption A. Assume one of the following conditions hold:

(1) $\gamma> 0$, $1<p_1<p_2<1+4/N$, $\rho>0$;

(2) $\gamma> 0$, $1<p_1<p_2=1+4/N$, $0<\rho<\|Q_{p_2}\|_2^2$;\\
then $G_\rho$ is not empty and orbitally stable.
\end{theorem}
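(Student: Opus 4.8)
The plan is to run the standard variational / concentration--compactness scheme and reduce the orbital stability of $G_\rho$ to the relative compactness in $H^1(\mathbb{R}^N)$ of every minimizing sequence for $A_\rho$, exactly as for the single--power Theorem \ref{thm1.1}; the two--term nonlinearity $F(s)=\frac{1}{p_1+1}|s|^{p_1+1}+\frac{1}{p_2+1}|s|^{p_2+1}$ only complicates the bookkeeping. First I would show that $E$ is coercive and bounded below on $M_\rho$. I control the potential term by the refined estimate for $-\Delta-\gamma|x|^{-\alpha}$, namely $\int_{\mathbb{R}^N}|x|^{-\alpha}|u|^2\,dx\le \varepsilon\|\nabla u\|_2^2+C_\varepsilon\|u\|_2^2$ valid for $0<\alpha<\min\{2,N\}$, and the two nonlinear terms by the sharp Gagliardo--Nirenberg inequality. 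In case (1) both exponents are $L^2$--subcritical, so each nonlinear term is $o(\|\nabla u\|_2^2)$ and coercivity is immediate; in case (2) the critical term $p_2=1+4/N$ is treated by the sharp constant, the coefficient of $\|\nabla u\|_2^2$ staying positive precisely when $0<\rho<\|Q_{p_2}\|_2^2$, while the subcritical $p_1$--term is absorbed. Hence $A_\rho>-\infty$, minimizing sequences are bounded in $H^1$, and (with conservation of mass and energy) solutions enjoy a uniform $H^1$ bound, so they are global by the blow--up alternative in Assumption A.

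Next I would establish the three inequalities driving the dichotomy alternative; write $E^\infty,A^\infty_\rho$ for the potential--free ($\gamma=0$) energy and infimum. (i) $A_\rho<0$: with the mass--preserving rescaling $u_t(x)=t^{N/2}\phi(tx)$, the negative potential and $p_1$--nonlinear terms are of order $t^{\min\{\alpha,\,N(p_1-1)/2\}}$, strictly below the gradient order $t^2$, so $E(u_t)<0$ for small $t$. (ii) $A_\rho<A^\infty_\rho$: since $E(u)=E^\infty(u)-\tfrac{\gamma}{2}\int|x|^{-\alpha}|u|^2\,dx$ and $\gamma>0$, evaluating at a minimizer of $E^\infty$ (or a compactly supported near--minimizer placed at the origin) makes the last integral strictly positive. (iii) The strict binding inequality $A_\rho<A_s+A^\infty_{\rho-s}$ for all $0<s<\rho$. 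For this I use the sub--homogeneity $A_{\theta\sigma}<\theta A_\sigma$ for $\theta>1$: testing with $\sqrt{\theta}\,u$ gives $E(\sqrt\theta u)-\theta E(u)=-\sum_{i}\frac{\theta^{(p_i+1)/2}-\theta}{p_i+1}\int|u|^{p_i+1}\,dx<0$ because $(p_i+1)/2>1$, and the gap is strict since the nonlinear energy of a minimizing sequence cannot vanish (otherwise $\lim E(u_n)\ge\tfrac{\sigma}{2}\lambda_1$ with $\lambda_1=\inf\mathrm{spec}(-\Delta-\gamma|x|^{-\alpha})$, and one checks $\tfrac{\sigma}{2}\lambda_1>A_\sigma$). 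Sub--homogeneity yields $A_\rho<A_s+A_{\rho-s}$, and $A_{\rho-s}\le A^\infty_{\rho-s}$ (again from $\gamma>0$) gives (iii). Proving this strict subadditivity, and verifying it survives the critical case (2) where the $p_2$--term is scale invariant in the mass, is the main obstacle.

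Finally I apply the concentration--compactness principle to a minimizing sequence $u_n\in M_\rho$. Vanishing is excluded by (i): then $\int F(u_n)\to0$, and splitting $\int|x|^{-\alpha}|u_n|^2$ over $|x|\le R$ and $|x|>R$ (H\"older plus the decay of $|x|^{-\alpha}$) forces $\int|x|^{-\alpha}|u_n|^2\to0$, whence $\liminf E(u_n)\ge0>A_\rho$. Dichotomy into masses $s$ and $\rho-s$ with mutually diverging supports yields $A_\rho\ge A_s+A^\infty_{\rho-s}$ (the piece leaving the origin loses the potential), contradicting (iii); concentration along $y_n\to\infty$ is ruled out by (ii). Thus $u_n\to u$ strongly in $H^1$ with $u\in G_\rho$, so $G_\rho\neq\emptyset$ and every minimizing sequence is relatively compact up to the phase orbit. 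Orbital stability then follows by the usual contradiction: were it false, there would be data $u_{0,n}\to G_\rho$ whose (global) solutions satisfy $\inf_{v\in G_\rho}\|u_n(t_n)-v\|_{H^1}\ge\varepsilon$; by the conservation laws of Assumption A the sequence $\{u_n(t_n)\}$ is minimizing for $A_\rho$, hence relatively compact and convergent to a point of $G_\rho$, a contradiction. This reproduces the scheme of Theorems \ref{thm1.1} and \ref{thm1.3}, the only new ingredient being the two--term Gagliardo--Nirenberg bookkeeping in the coercivity and sub--homogeneity steps.
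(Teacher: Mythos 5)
Your proposal is correct and follows essentially the same route as the paper, which proves Theorem \ref{thm1.2} precisely by ``small modifications'' of the proof of Theorem \ref{thm1.1}: coercivity of $E$ on $M_\rho$ via Lemma \ref{lem2.2} and the sharp Gagliardo--Nirenberg inequality (with $\rho<\|Q_{p_2}\|_2^2$ keeping the gradient coefficient positive in the critical case), negativity $A_\rho<0$ by mass-preserving scaling, strict binding $A_\rho<A_\eta+A^{\infty}_{\rho-\eta}$ via the sub-homogeneity $A_{\theta\eta}\leq\theta A_\eta$ together with Lions' Lemma II.1, exclusion of vanishing, dichotomy and escaping translations by concentration compactness, and the standard contradiction argument for stability. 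The one point to tighten is in the final step: $u_n(t_n)$ has mass $\|u_{0,n}\|_2^2\to\rho$ but not exactly $\rho$, so, as in the paper's Step 7, one should renormalize $\tilde{u}_n=\sqrt{\rho}\,u_n(t_n)/\|u_n(t_n)\|_2$ (checking $E(\tilde{u}_n)\to A_\rho$) before applying the compactness of minimizing sequences.
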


\begin{theorem}\label{thm1.4}
Let $N\geq 1$, $0<\alpha<\min\{2,N\}$, $\beta\in(0,N)$,
$f(u)=(I_\beta\ast|u|^{q_1})|u|^{q_1-2}u+(I_\beta\ast|u|^{q_2})|u|^{q_2-2}u$,
$V(x)$ and $f(u)$ satisfy Assumption A. Assume one of the following
conditions hold:

(1) $\gamma> 0$, $1+{\beta}/{N}<q_1<q_2<1+(2+\beta)/N$, $\rho>0$;

(2) $\gamma> 0$, $1+{\beta}/{N}<q_1<q_2=1+(2+\beta)/N$,
$0<\rho<\|W_{q_2}\|_2^2$;\\
then $G_\rho$ is not empty and orbitally stable.
\end{theorem}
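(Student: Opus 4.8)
The plan is to realize $G_\rho$ as the set of minimizers of $A_\rho=\inf_{M_\rho}E$ via the concentration--compactness method, and then to deduce orbital stability from compactness of minimizing sequences together with the conservation laws of Assumption~A. Write $E^\infty(u)=\tfrac12\|\nabla u\|_2^2-\int_{\mathbb{R}^N}F(u)\,dx$ for the potential--free energy and $A^\infty_\sigma=\inf_{M_\sigma}E^\infty$ for the associated ``problem at infinity''; since $\gamma>0$, $E(u)=E^\infty(u)-\tfrac{\gamma}{2}\int_{\mathbb{R}^N}|x|^{-\alpha}|u|^2\,dx\le E^\infty(u)$. First I would establish coercivity and boundedness from below of $E$ on $M_\rho$. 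For each convolution term, the Hardy--Littlewood--Sobolev inequality followed by the sharp Gagliardo--Nirenberg inequality gives $\int_{\mathbb{R}^N}(I_\beta*|u|^{q_i})|u|^{q_i}\,dx\le C_i\|\nabla u\|_2^{Nq_i-N-\beta}\|u\|_2^{2q_i-Nq_i+N+\beta}$, while the linear potential is controlled by the refined estimate $\int_{\mathbb{R}^N}|x|^{-\alpha}|u|^2\,dx\le C\|\nabla u\|_2^{\alpha}\|u\|_2^{2-\alpha}$, valid for $0<\alpha<\min\{2,N\}$. In case (1) the exponents $Nq_1-N-\beta$, $Nq_2-N-\beta$ and $\alpha$ are all strictly below $2$, so on $M_\rho$ one has $E(u)\ge\tfrac12\|\nabla u\|_2^2-C(\rho)\big(\|\nabla u\|_2^{\alpha}+\|\nabla u\|_2^{Nq_1-N-\beta}+\|\nabla u\|_2^{Nq_2-N-\beta}\big)$, which is coercive; hence $A_\rho>-\infty$ and minimizing sequences are bounded in $H^1$. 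In case (2) the top term scales exactly like $\|\nabla u\|_2^2$, and the hypothesis $\rho<\|W_{q_2}\|_2^2$ is precisely what makes $\tfrac{1}{2q_2}C^{\mathrm{opt}}\rho^{(2+\beta)/N}<\tfrac12$; after absorbing the lower--order $q_1$ and potential terms, the coefficient of $\|\nabla u\|_2^2$ stays positive and coercivity persists.

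Next I would prove $A_\rho<0$ and the strict binding inequality. Negativity follows by testing $E$ on the mass--preserving dilations $u_\lambda(x)=\lambda^{N/2}v(\lambda x)$ of a fixed $v\in M_\rho$: as $\lambda\to0$ the gradient term is $O(\lambda^2)$ whereas the negative potential and nonlinear contributions scale like $\lambda^{\alpha}$ and $\lambda^{Nq_i-N-\beta}$ with smaller exponents, so they dominate and $E(u_\lambda)<0$ for small $\lambda$. The decisive step is the binding inequality $A_\rho<A_{\rho_1}+A^\infty_{\rho_2}$ for every splitting $\rho_1+\rho_2=\rho$ with $\rho_2>0$, which excludes dichotomy and the escape of mass to infinity. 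To prove it I would pick near--minimizers $w_1\in M_{\rho_1}$ of $A_{\rho_1}$ and $w_2\in M_{\rho_2}$ of $A^\infty_{\rho_2}$, both compactly supported after truncation, and test $E$ on $w_1+w_2(\cdot-y)$ with $|y|$ large enough to make the supports disjoint but kept \emph{finite}. Then $\|w_1+w_2(\cdot-y)\|_2^2=\rho$ exactly, the nonlocal cross terms in the Choquard nonlinearities are nonnegative and therefore lower the energy, and the translated bump still lies in the potential well, so $\int_{\mathbb{R}^N}|x|^{-\alpha}|w_2(x-y)|^2\,dx\ge c>0$; this yields $E(w_1+w_2(\cdot-y))\le A_{\rho_1}+A^\infty_{\rho_2}-\tfrac{\gamma}{2}c+o(1)<A_{\rho_1}+A^\infty_{\rho_2}$. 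I expect this to be the main obstacle: in contrast to the single--power Theorems~\ref{thm1.1}--\ref{thm1.3}, the nonlinearity is not homogeneous, so the escaping piece admits no exact scaling law, and one must verify directly that the two--bump test function has mass exactly $\rho$ and that the nonlocal cross terms decay and carry the favourable sign.

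With $A_\rho<0$ and the binding inequality in hand, I would apply Lions' concentration--compactness lemma to a bounded minimizing sequence $\{u_n\}$: vanishing is ruled out by $A_\rho<0$, dichotomy by the binding inequality, and since $\gamma>0$ pins the mass near the origin --- a sequence drifting to infinity would satisfy $\liminf E(u_n)\ge A^\infty_\rho>A_\rho$ --- no translations are needed. Hence $\{u_n\}$ is tight, and combining Rellich compactness on balls, weak lower semicontinuity of the gradient term, and strong $L^{2N/(N+\beta)}$ convergence of $|u_n|^{q_i}$ (so that the convolution terms pass to the limit), I would conclude that $u_n\to u$ strongly in $H^1$ with $u\in G_\rho$. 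This shows $G_\rho\neq\emptyset$ and, more importantly, that every minimizing sequence for $A_\rho$ is relatively compact in $H^1$ with all limit points in $G_\rho$.

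Finally I would establish orbital stability in the sense of Definition~\ref{def1.1} by contradiction. Coercivity gives a uniform $H^1$ bound on solutions, so the blow--up alternative in Assumption~A makes the flow global. If $G_\rho$ were not stable, there would exist $\epsilon_0>0$, initial data $u_{0,n}$ with $\inf_{v\in G_\rho}\|u_{0,n}-v\|_{H^1}\to0$, and times $t_n\ge0$ with $\inf_{v\in G_\rho}\|u_n(t_n)-v\|_{H^1}\ge\epsilon_0$. By conservation of mass and energy, $\|u_n(t_n)\|_2^2\to\rho$ and $E(u_n(t_n))\to A_\rho$, so $u_n(t_n)$ is (after a harmless normalization of the mass to $\rho$) a minimizing sequence for $A_\rho$. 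The compactness established above then forces a subsequence of $u_n(t_n)$ to converge in $H^1$ to some $v\in G_\rho$, contradicting the bound $\epsilon_0$. This proves the orbital stability of $G_\rho$ and completes the argument.
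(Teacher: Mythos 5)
Your overall architecture (minimization of $E$ on $M_\rho$, exclusion of vanishing and dichotomy, compactness of minimizing sequences, and the final contradiction argument via conservation laws and mass renormalization) coincides with the paper's scheme, and your treatment of coercivity, of $A_\rho<0$, of the case (2) threshold $\rho<\|W_{q_2}\|_2^2$, of global existence, and of Step 7 is sound. The genuine gap is in your proof of the strict binding inequality $A_\rho<A_{\rho_1}+A^\infty_{\rho_2}$, on which both your exclusion of dichotomy and your claim that ``no translations are needed'' rest. In your two-bump construction the strict gain is $g=\frac{\gamma}{2}\int_{\mathbb{R}^N}|x|^{-\alpha}|w_2(x-y)|^2\,dx\lesssim |y|^{-\alpha}\rho_2$, while the losses are the $O(\epsilon)$ errors from taking near-minimizers and truncating them to compact support. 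The order of choices is circular: the truncation radius $R(\epsilon)$, and hence the admissible translation $|y|\sim R(\epsilon)$, blows up as $\epsilon\to 0$, so the gain $g\sim R(\epsilon)^{-\alpha}$ is not guaranteed to dominate the $O(\epsilon)$ loss for \emph{any} admissible choice; since the minimizing sequences come with no quantitative control, nothing rules out $g\le C\epsilon$ throughout. As written, your construction only yields the non-strict inequality $A_\rho\le A_{\rho_1}+A^\infty_{\rho_2}$, which is exactly (3.6) of the paper (inequality (3) of Lions) and is insufficient to exclude dichotomy.

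The paper obtains strictness by a different mechanism, which also answers the worry you flagged as ``the main obstacle.'' Strict subadditivity is derived not from the potential well but from mass-rescaling: testing with $\sqrt{\theta}\varphi_n$ gives $A_{\theta\eta}\le\theta A_\eta$ because each nonlinear term picks up a factor $\theta^{q_i}$ with $q_i>1$ --- and this works \emph{verbatim} for the non-homogeneous sum $q_1\neq q_2$, since only superlinearity in $\theta$ of each Choquard term is used, not any dilation scaling law of an escaping profile (this is precisely the ``small modification'' by which the paper reduces Theorem \ref{thm1.4} to Theorem \ref{thm1.3}). Combining this with $A_\eta\le A^\infty_\eta$ (valid since $\gamma>0$) and Lemma II.1 of Lions yields $A_\rho<A_\eta+A_{\rho-\eta}\le A_\eta+A^\infty_{\rho-\eta}$ for all $\eta\in(0,\rho)$. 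The strict comparison $A_\rho<A^\infty_\rho$, which you need to bound the translations $y_{n_k}$, is then proved in the paper's Step 5 without any quantifier clash: one first shows $A^\infty_\rho$ is \emph{attained} (Steps 1--4 run with $\gamma=0$, using the support-distance decay of the nonlocal cross term via a Hardy--Littlewood--Sobolev estimate with exponent $\beta+\delta$), and then subtracts the strictly positive potential energy of that fixed minimizer. If you replace your two-bump step by this sub-homogeneity argument, the remainder of your proposal goes through essentially as in the paper.
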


\begin{theorem}\label{thm1.5}
Let $N\geq 1$, $0<\alpha<\min\{2,N\}$, $\beta\in(0,N)$,
$f(u)=(I_\beta\ast|u|^q)|u|^{q-2}u+|u|^{p-1}u$, $V(x)$ and $f(u)$
satisfy Assumption A. Assume one of the following conditions hold:

(1) $\gamma> 0$, $1<p<1+4/N$, $1+{\beta}/{N}<q<1+(2+\beta)/N$,
$\rho>0$;

(2) $\gamma> 0$, $1<p<1+4/N$, $q=1+(2+\beta)/N$,
$0<\rho<\|W_q\|_2^2$;

(3) $\gamma> 0$, $p=1+4/N$, $1+{\beta}/{N}<q<1+(2+\beta)/N$,
$0<\rho<\|Q_p\|_2^2$;

(4) $\gamma> 0$, $p=1+4/N$, $q=1+(2+\beta)/N$,
$(\sqrt{\rho}/\|Q_p\|_2)^{4/N}+(\sqrt{\rho}/\|W_q\|_2)^{(2\beta+4)/N}<1$;\\
then $G_\rho$ is not empty and orbitally stable.
\end{theorem}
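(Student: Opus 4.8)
The plan is to realize $G_\rho$ as the set of minimizers of the constrained problem (\ref{e1.3}) and to prove compactness of minimizing sequences by the concentration--compactness principle, combining the estimates used for the pure power case (Theorem \ref{thm1.1}) and the pure Choquard case (Theorem \ref{thm1.3}). For Type 5 one has
$$F(u)=\frac{1}{2q}(I_\beta\ast|u|^q)|u|^q+\frac{1}{p+1}|u|^{p+1},$$
so $E$ splits into the linear part $\tfrac12\int(|\nabla u|^2-\gamma|x|^{-\alpha}|u|^2)$ plus two nonlinear densities, each of which I will control by its own sharp Gagliardo--Nirenberg inequality.

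First I would show that $A_\rho$ is finite and that minimizing sequences are bounded in $H^1$. On $M_\rho$ the sharp inequalities
$$\int_{\mathbb{R}^N}|u|^{p+1}\,dx\le C_p\|\nabla u\|_2^{N(p-1)/2}\,\rho^{\frac{p+1}{2}-\frac{N(p-1)}{4}},$$
$$\int_{\mathbb{R}^N}(I_\beta\ast|u|^q)|u|^q\,dx\le C_q\|\nabla u\|_2^{N(q-1)-\beta}\,\rho^{\,q-\frac{N(q-1)-\beta}{2}}$$
turn both nonlinear terms into powers of $\|\nabla u\|_2$, while the refined estimate $\int\gamma|x|^{-\alpha}|u|^2\le\varepsilon\|\nabla u\|_2^2+C_\varepsilon\rho$ (valid since $\alpha<2$) makes the potential subordinate to the kinetic term. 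In the subcritical cases (1)--(3) the relevant gradient exponents are $<2$, so $E(u)\ge c\|\nabla u\|_2^2-C$ and coercivity follows. In the doubly critical case (4) both gradient exponents equal $2$, with best constants $C_p=\tfrac{p+1}{2}\|Q_p\|_2^{-4/N}$ and $C_q=q\|W_q\|_2^{-(2\beta+4)/N}$, giving
$$E(u)\ge\tfrac12\Big(1-(\sqrt\rho/\|Q_p\|_2)^{4/N}-(\sqrt\rho/\|W_q\|_2)^{(2\beta+4)/N}-\varepsilon\Big)\|\nabla u\|_2^2-C_\varepsilon\rho;$$
this is exactly where the combined smallness hypothesis is used, as it forces the bracket to be positive for small $\varepsilon$.

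Next I would run the concentration--compactness alternative on a minimizing sequence $(u_n)\subset M_\rho$. Vanishing is ruled out because it drives both nonlinear terms to $0$, forcing $\liminf E(u_n)\ge0$, whereas testing $E$ on a mass--preserving rescaling $u_\lambda=\lambda^{N/2}u(\lambda\cdot)$ and using $\gamma>0$ (so that the potential contributes at order $\lambda^\alpha$ with $\alpha<2$) gives $A_\rho<0$. To exclude dichotomy I would prove the strict subadditivity $A_\rho<A_{\rho_1}+A^\infty_{\rho-\rho_1}$ for $0<\rho_1<\rho$, where $A^\infty$ is the infimum for the potential--free energy; the attractive potential strictly lowers the energy of any localized profile, yielding in particular $A_\rho<A^\infty_\rho$. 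This strict gap also keeps the concentration center bounded, since if mass escaped to infinity the potential would become invisible and the limiting energy would be $\ge A^\infty_\rho$. Hence the compactness alternative holds, $u_n\to u$ strongly in $H^1$, and $u\in G_\rho$, so $G_\rho\ne\emptyset$.

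Finally, orbital stability follows from this compactness by the standard contradiction argument: were $G_\rho$ unstable, Assumption A would supply solutions starting near $G_\rho$ that reach distance $\ge\epsilon$ from it at some times $t_n$; by conservation of mass and energy the states $u(t_n)$ form a minimizing sequence for $A_\rho$, which by the compactness just proved converges along a subsequence to a point of $G_\rho$, contradicting that they stay outside an $\epsilon$--neighborhood. The main obstacle is the strict subadditivity in case (4): because the two nonlinearities scale differently, $A^\infty_\rho$ is no longer a single power of $\rho$, so one cannot read off subadditivity from a one--parameter rescaling of a single profile, and the comparison must track how each nonlinear term and the potential respond separately to concentration and to splitting of the mass.
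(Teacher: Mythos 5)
Your proposal is correct and takes essentially the same route as the paper, whose entire proof of Theorem \ref{thm1.5} is ``combine the proofs of Theorems \ref{thm1.1} and \ref{thm1.3}'': coercivity from the two sharp Gagliardo--Nirenberg inequalities together with Lemma \ref{lem2.2} (your case-(4) bracket is exactly the stated hypothesis, with the same constants $C_p=\frac{N+2}{N}\|Q_p\|_2^{-4/N}$ and $C_q=q\|W_q\|_2^{-(2\beta+4)/N}$), negativity of $A_\rho$ from the $\lambda^\alpha$ potential term under $\gamma>0$, Lions' concentration--compactness with vanishing and dichotomy excluded, boundedness of the translation centers via comparison with $A_\rho^\infty$, and the standard contradiction argument for stability. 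One clarification on the step you flag as the main obstacle: strict subadditivity in case (4) does not require $\rho\mapsto A_\rho^\infty$ to be a single power of $\rho$, since the paper derives $A_{\theta\eta}\leq\theta A_\eta$ from the amplitude scaling $\varphi\mapsto\sqrt{\theta}\,\varphi$, under which each nonlinear term is superquadratic ($\theta^{(p+1)/2}>\theta$ and $\theta^{q}>\theta$), so the Step 4 arguments of Theorems \ref{thm1.1} and \ref{thm1.3} — including the $d_{n_k}^{-\delta}$ decay of the nonlocal cross term $\int_{\mathbb{R}^N}(I_\beta\ast|u_{n_k}^{(1)}|^q)|u_{n_k}^{(2)}|^q\,dx$, which your sketch should make explicit — apply verbatim to the combined nonlinearity.
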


We should point out that, among  the methods used in the study of
orbital stability of standing waves, the profile decomposition
method plays an  important role in recent studies, see
\cite{Bensouilah-Dinh-Zhu 2018}, \cite{Feng-Zhang 2018} and
\cite{Zhu JEE 2017}. In \cite{Bensouilah-Dinh-Zhu 2018}, the authors
considered a Schr\"{o}dinger equation with inverse-square potential,
i.e. (\ref{e1.1}) with $N\geq 3$, $\alpha=2$ and $\gamma<(N-2)^2/4$.
By using the equivalence of $\|\nabla u\|_2^2$ and
$\int_{\mathbb{R}^N}(|\nabla u|^2-\gamma|x|^{-2}|u|^2)dx$,
Bensouilah \cite{Bensouilah 2018} obtained the profile decomposition
of a bounded sequence in $H^1(\mathbb{R}^N)$ related to the problem,
and based of which, \cite{Bensouilah-Dinh-Zhu 2018} studied the
orbital stability of standing waves. However, there is not an
equivalence between $\|\nabla u\|_2^2$ and
$\int_{\mathbb{R}^N}(|\nabla u|^2-\gamma|x|^{-\alpha}|u|^2)dx$ for
$0<\alpha<\min\{2,N\}$ and we can not obtain the profile
decomposition in this case. But in view of that
$\int_{\mathbb{R}^N}(|\nabla u|^2-\gamma|x|^{-\alpha}|u|^2)dx$ can
be controlled by $\epsilon\|\nabla u\|_2^2$ and a function of
$\|u\|_2^2$ (see Lemma \ref{lem2.2}),  and by carefully examining
the application of concentration compactness principle in the study
of orbital stability of standing waves (see \cite{Cazenave-Lions
1982} and \cite{Lions 1984}), we can solve the problem by using the
concentration compactness principle in a systematic way. In fact,
the profile decomposition can be seen as another equivalent
description of the concentration compactness principle, see
\cite{Feng-Zhang 2018}.

\medskip

This paper is organized as follows. In Section 2, we give some
preliminaries. Section 3 is devoted to the proof of Theorems
\ref{thm1.1}-\ref{thm1.5}.

\medskip

\textbf{Notation.} Throughout this paper, we use the following
notation. $C>0$  stands for a constant that may be different from
line to line when it does not cause any confusion. The notation
$A\lesssim B$ means that $A\leq CB$ for some constant $C>0$.
$L^r(\mathbb{R}^N)$ with $1\leq r<\infty$ denotes the Lebesgue space
with the norms
$\|u\|_r=\left(\int_{\mathbb{R}^N}|u|^rdx\right)^{1/r}$.
 $ H^1(\mathbb{R}^N)$ is the usual Sobolev space with norm
$\|u\|_{H^1(\mathbb{R}^N)}=\left(\int_{\mathbb{R}^N}(|\nabla
u|^2+|u|^2)dx\right)^{1/2}$.  $B_R(x)$ denotes the ball in
$\mathbb{R}^N$ centered at $x$ with radius $R$.
$B_R^c(x)=\mathbb{R}^N\backslash B_R(x)$. $\chi_B(x)=1$ if $x\in B$,
and $=0$ if $x\not\in B$.

\section{Preliminaries}

\setcounter{section}{2} \setcounter{equation}{0}

The following generalized Gagliardo-Nirenberg inequality can be
found in \cite{Weinstein 1983}.

\begin{lemma}\label{lem2.1}
Let $N\geq 1$ and $0<\eta<4/(N-2)_+$, then the following sharp
Gagliardo-Nirenberg inequality
\begin{equation*}
\|u\|_{\eta+2}^{\eta+2}\leq
C_{GN}(\eta)\|u\|_2^{2+\eta(2-N)/2}\|\nabla u\|_2^{\eta N/2}
\end{equation*}
holds for any $u\in H^1(\mathbb{R}^N)$. The sharp constant
$C_{GN}(\eta)$ is
\begin{equation*}
C_{GN}(\eta)=\frac{2(\eta+2)}{4-(N-2)\eta}\left(\frac{4-(N-2)\eta}{N\eta}\right)^{N\eta/4}\frac{1}{\|Q_{\eta+1}\|_2^\eta},
\end{equation*}
where $Q_{\eta+1}$ is defined in Theorem \ref{thm1.1}.
\end{lemma}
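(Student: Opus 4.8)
The plan is to follow Weinstein's variational argument. I would first recast the sharp inequality as an extremal problem for the Weinstein quotient
\begin{equation*}
J(u)=\frac{\|\nabla u\|_2^{\eta N/2}\,\|u\|_2^{2+\eta(2-N)/2}}{\|u\|_{\eta+2}^{\eta+2}},\qquad u\in H^1(\mathbb{R}^N)\setminus\{0\},
\end{equation*}
so that the best constant in the lemma is $C_{GN}(\eta)=1/\inf J$. A direct computation shows that $J$ is invariant under the two-parameter family $u\mapsto \mu u(\lambda\,\cdot)$: the exponents are arranged precisely so that both the powers of $\mu$ and of $\lambda$ cancel. I would use this invariance both to normalize minimizing sequences and, later, to pass between the extremal and a solution of a model equation.

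Next I would establish existence of a minimizer $\psi$. Taking a minimizing sequence $(u_n)$ and replacing each $u_n$ by its Schwarz symmetrization does not increase $J$, since symmetrization preserves $\|u\|_2$ and $\|u\|_{\eta+2}$ and does not increase $\|\nabla u\|_2$; hence I may assume the $u_n$ are nonnegative, radial and decreasing. Using the dilation invariance I renormalize so that $\|u_n\|_2=\|\nabla u_n\|_2=1$; then $(u_n)$ is bounded in $H^1(\mathbb{R}^N)$, and the compact embedding of the radial subspace $H^1_{\mathrm{rad}}(\mathbb{R}^N)\hookrightarrow L^{\eta+2}(\mathbb{R}^N)$ (Strauss) upgrades weak convergence to strong convergence in $L^{\eta+2}$. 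Weak lower semicontinuity of the $H^1$ norms together with this strong convergence forces the weak limit $\psi$ to be a nonzero minimizer, and the strong maximum principle makes it positive.

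I would then write down the Euler--Lagrange equation for $\psi$: differentiating $\log J$ yields an equation of the form $-a\Delta\psi+b\psi=c\,\psi^{\eta+1}$ with positive constants $a,b,c$ determined by $\|\nabla\psi\|_2^2$, $\|\psi\|_2^2$ and $\|\psi\|_{\eta+2}^{\eta+2}$. Choosing the scaling parameters $\mu,\lambda$ appropriately transforms this into $-\Delta Q+Q=Q^{\eta+1}$, so by the uniqueness of the positive radial solution the extremal is a rescaling of $Q_{\eta+1}$. Finally, testing the model equation against $Q$ and against $x\cdot\nabla Q$ gives the Nehari identity $\|\nabla Q\|_2^2+\|Q\|_2^2=\|Q\|_{\eta+2}^{\eta+2}$ and the Pohozaev identity $\tfrac{N-2}{2}\|\nabla Q\|_2^2+\tfrac{N}{2}\|Q\|_2^2=\tfrac{N}{\eta+2}\|Q\|_{\eta+2}^{\eta+2}$; solving these two linear relations expresses $\|\nabla Q\|_2^2=\tfrac{N\eta}{4-(N-2)\eta}\|Q\|_2^2$ and $\|Q\|_{\eta+2}^{\eta+2}=\tfrac{2(\eta+2)}{4-(N-2)\eta}\|Q\|_2^2$, and substituting into $J(Q)$ produces the stated value of $C_{GN}(\eta)$.

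The main obstacle is the compactness step: without the rearrangement reduction a minimizing sequence can either spread out or escape to infinity, so the crux is the combination of Schwarz symmetrization with the compact radial embedding to recover a genuine minimizer. The identification of the extremal with $Q_{\eta+1}$ further relies on the (standard and citable) uniqueness of the positive radial ground state, and the final bookkeeping of exponents in the Nehari/Pohozaev computation must be carried out carefully to land exactly on the asserted constant.
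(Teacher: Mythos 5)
Your proposal is correct and is essentially the argument the paper relies on: the paper offers no proof of Lemma \ref{lem2.1} at all, simply citing Weinstein \cite{Weinstein 1983}, and your variational scheme (scale-invariant Weinstein quotient, symmetrization plus compactness to produce a positive radial minimizer, Euler--Lagrange equation rescaled to $-\Delta Q+Q=Q^{\eta+1}$ with Kwong-type uniqueness, then the Nehari and Pohozaev identities to evaluate the quotient) is exactly the cited proof, and your exponent bookkeeping does land on the stated $C_{GN}(\eta)$. One small caveat: for $N=1$ the Strauss embedding of merely radial $H^1$ functions is not compact, but since your minimizing sequence is symmetric decreasing, the pointwise bound $|u(x)|\le |x|^{-1/2}\|u\|_2$ combined with local Rellich compactness restores the strong $L^{\eta+2}$ convergence you need.
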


Next, we give a refined estimate for the linear operator
$-\Delta-\gamma |x|^{-\alpha}$.

\begin{lemma}\label{lem2.2}
Let $N\geq 1$, $0<\alpha<\min\{2,N\}$ and $\gamma\in \mathbb{R}$.
Then for any $\epsilon>0$, there exists a constant
$\delta=\delta(\epsilon,\|u\|_2)>0$ such that
\begin{equation*}
\epsilon\int_{\mathbb{R}^N}|\nabla u|^2dx-\gamma
\int_{\mathbb{R}^N}\frac{|u|^2}{|x|^\alpha} dx\geq
-\delta(\epsilon,\|u\|_2)
\end{equation*}
for any $u\in H^1(\mathbb{R}^N)$.
\end{lemma}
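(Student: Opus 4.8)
The plan is to reduce immediately to the case $\gamma>0$: when $\gamma\leq 0$ the term $-\gamma\int_{\mathbb{R}^N}|x|^{-\alpha}|u|^2\,dx$ is nonnegative, so the left-hand side is already $\geq 0$ and the inequality holds trivially with any $\delta>0$. For $\gamma>0$ the goal becomes the interpolation estimate
\[
\gamma\int_{\mathbb{R}^N}\frac{|u|^2}{|x|^\alpha}\,dx \;\leq\; \epsilon\int_{\mathbb{R}^N}|\nabla u|^2\,dx + \delta(\epsilon,\|u\|_2),
\]
which I would obtain by splitting $\mathbb{R}^N$ into a ball $B_R(0)$ and its complement $B_R^c(0)$. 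On $B_R^c(0)$ one has $|x|^{-\alpha}\leq R^{-\alpha}$, so that contribution is bounded by $R^{-\alpha}\|u\|_2^2$ and feeds only into $\delta$.

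The heart of the argument is the estimate on $B_R(0)$, where the singularity sits. First I would apply H\"older's inequality with conjugate exponents $s,s'$:
\[
\int_{B_R}\frac{|u|^2}{|x|^\alpha}\,dx \;\leq\; \Big(\int_{B_R}|u|^{2s}\,dx\Big)^{1/s}\Big(\int_{B_R}|x|^{-\alpha s'}\,dx\Big)^{1/s'}.
\]
The hypothesis $\alpha<N$ guarantees $N/\alpha>1$, so I can choose $s'$ with $1<s'<N/\alpha$, which makes $\int_{B_R}|x|^{-\alpha s'}\,dx$ finite. Setting $\eta=2s-2>0$ and applying the sharp Gagliardo--Nirenberg inequality of Lemma~\ref{lem2.1} to $\|u\|_{2s}^{2s}=\|u\|_{\eta+2}^{\eta+2}$ then yields a bound of the form $C\|u\|_2^{a}\|\nabla u\|_2^{b}$ with gradient exponent $b=\eta N/(2s)=(s-1)N/s$.

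The decisive point — and the step that uses the full strength of $\alpha<\min\{2,N\}$ — is that $b<2$, so a concluding application of Young's inequality can absorb $\|\nabla u\|_2^{b}$ into $\epsilon\|\nabla u\|_2^2$ at the cost of a constant depending only on $\epsilon$ and $\|u\|_2$. The admissible values $s=s'/(s'-1)$ range over $(1,\,N/(N-\alpha))$, and since $\alpha<2$ one has $N/(N-\alpha)<N/(N-2)$ when $N\geq 3$, which is exactly the threshold $s<N/(N-2)$ equivalent to $b<2$; for $N=1,2$ the bound $b=(s-1)N/s<2$ holds automatically and $\eta$ is unrestricted in Lemma~\ref{lem2.1}. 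The main obstacle is thus the bookkeeping: checking that the window for $s'$ is nonempty and simultaneously compatible with the local integrability of $|x|^{-\alpha s'}$ and with the subcritical gradient exponent required by Young's inequality. Once $b<2$ is secured, writing $C\|u\|_2^{a}\|\nabla u\|_2^{b}\leq \epsilon\|\nabla u\|_2^2 + C_\epsilon\|u\|_2^{2a/(2-b)}$ and adding the complement contribution gives $\delta(\epsilon,\|u\|_2)=\gamma C_\epsilon\|u\|_2^{2a/(2-b)}+\gamma R^{-\alpha}\|u\|_2^2$, which depends only on $\epsilon$ and $\|u\|_2$, completing the proof.
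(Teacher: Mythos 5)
Your argument follows essentially the same route as the paper's proof: reduce to $\gamma>0$, split $\mathbb{R}^N$ at the singularity, apply H\"older on the ball using $|x|^{-\alpha s'}$ integrable for $s'<N/\alpha$, convert the resulting $\|u\|_{2s}^2$ via the sharp Gagliardo--Nirenberg inequality of Lemma \ref{lem2.1}, and absorb the subquadratic gradient power by Young's inequality, with $\alpha<\min\{2,N\}$ entering exactly where it should. Your exterior treatment is in fact slightly simpler than the paper's: the pointwise bound $|x|^{-\alpha}\leq R^{-\alpha}$ sends the tail directly into $\delta$ as $R^{-\alpha}\|u\|_2^2$ with no gradient term, whereas the paper uses a second H\"older exponent $s>N/\alpha$ and a second Gagliardo--Nirenberg estimate there; both are fine.

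One bookkeeping slip needs correcting. Since $s=s'/(s'-1)$ is decreasing in $s'$, the constraint $1<s'<N/\alpha$ makes $s$ range over $(N/(N-\alpha),\infty)$, not over $(1,N/(N-\alpha))$ as you wrote. Consequently $b=(s-1)N/s<2$ is not automatic for all admissible $s$: for $s'$ near $1$ one has $b$ near $N$, and for $N\geq 3$ the exponent $\eta=2s-2$ then even leaves the admissible range $\eta<4/(N-2)$ of Lemma \ref{lem2.1}. What rescues the argument is precisely your inequality $N/(N-\alpha)<N/(N-2)$ (valid because $\alpha<2$): it shows the window $N/(N-\alpha)<s<N/(N-2)$ is nonempty, so one must \emph{choose} $s$ in this sub-window rather than conclude $b<2$ for free --- equivalently $s'\in(N/2,N/\alpha)$, which gives $b=N/s'$ slightly above $\alpha<2$ and simultaneously keeps $\eta$ in the range of Lemma \ref{lem2.1}. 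This matches the paper's choice of $r<N/\alpha$ with $N/\alpha-r$ sufficiently small, so that $N-N/r'$ is close to $\alpha$. With this one-line fix (which only matters for $N\geq 3$; for $N=1,2$ your observation that $b<N\leq 2$ is correct), the proof is complete.
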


\begin{proof}
It obviously  holds for $\gamma\leq 0$. Now we prove the lemma for
$\gamma>0$. By the domain decomposition, the H\"{o}lder inequality
and $0<\alpha<\min\{2,N\}$, we know
\begin{equation}\label{e2.1}
\begin{split}
\int_{\mathbb{R}^N}\frac{|u|^2}{|x|^\alpha}
dx&=\int_{B_1(0)}\frac{|u|^2}{|x|^\alpha}
dx+\int_{B_1^c(0)}\frac{|u|^2}{|x|^\alpha} dx\\
&\leq
\||x|^{-\alpha}\chi_{B_1(0)}\|_r\||u|^2\|_{r'}+\||x|^{-\alpha}\chi_{B_1^c(0)}\|_s\||u|^2\|_{s'}\\
&=C_1\|u\|_{2r'}^2+C_2\|u\|_{2s'}^2,
\end{split}
\end{equation}
where $1/r+1/r'=1$,  $1/s+1/s'=1$, $r<N/\alpha$, $s>N/\alpha$,
$N/\alpha-r$ and $s-N/\alpha$ are both sufficiently small. Hence,
$r'-N/(N-\alpha)>0$ and $N/(N-\alpha)-s'>0$ are small. By the
Gagliardo-Nirenberg inequality (Lemma \ref{lem2.1}), we know
\begin{equation}\label{e2.2}
\|u\|_{2r'}^2\leq C\|u\|_2^{N/r'-(N-2)}\|\nabla u\|_2^{N-N/r'}.
\end{equation}
Noting that $|N-N/r'-\alpha|$ is sufficiently small and
$0<\alpha<2$, and by using the Young inequality
\begin{equation*}
a^{1/q}b^{1/q'}\leq \frac{a}{q}+\frac{b}{q'},\ a, b>0, \ 1/q+1/q'=1,
\end{equation*}
we have for any $\epsilon_1>0$, there exists
$\delta_1=\delta_1(\epsilon_1,\|u\|_2)$ such that
\begin{equation}\label{e2.3}
\begin{split}
\|u\|_2^{N/r'-(N-2)}\|\nabla
u\|_2^{N-N/r'}&=\|u\|_2^{N/r'-(N-2)}\|\nabla
u\|_2^{2\frac{N-N/r'}{2}}\\
&\leq \epsilon_1\|\nabla u\|_2^2+\delta_1(\epsilon_1,\|u\|_2).
\end{split}
\end{equation}
The same estimates hold for $\|u\|_{2s'}^2$. In view of
(\ref{e2.1})-(\ref{e2.3}), we complete the proof.
\end{proof}

The following concentration compactness principle is cited from
Lemma III.1 in \cite{Lions 1984}.

\begin{lemma}\label{lem con-com}
Let $N\geq 1$ and $\{u_n\}_{n=1}^{\infty}$ be a bounded sequence in
$H^1( \mathbb{R}^N)$ satisfying:
\begin{equation*}
\int_{\mathbb{R}^N}|u_n|^2dx=\lambda,
\end{equation*}
where $\lambda>0$ is fixed. Then there exists a subsequence
$\{u_{n_k}\}_{k=1}^{\infty}$ satisfying one of the three
possibilities:

(i) (compactness) there exists $\{y_{n_k}\}_{k=1}^{\infty}\subset
\mathbb{R}^N$ such that $|u_{n_k}(\cdot+y_{n_k})|^2$ is tight, i.e.,
\begin{equation*}
\forall\ \epsilon>0, \ \exists\  R<\infty,\
\int_{B_R(y_{n_k})}|u_{n_k}(x)|^2dx\geq \lambda-\epsilon;
\end{equation*}

(ii) (vanishing) $\lim_{k\to
\infty}\sup_{y\in\mathbb{R}^N}\int_{B_R(y)}|u_{n_k}(x)|^2dx=0$ for
all $R<\infty$;

(iii) (dichotomy) there exists $\sigma\in(0,\lambda)$ such that for
any $\epsilon>0$, there exist $k_0\geq 1$, $R_1=R_1(\epsilon)>0$,
$\{y_{n_k}\}_{k=1}^{\infty}\subset \mathbb{R}^N$ and
$u_{n_k}^{(1)}$, $u_{n_k}^{(2)}$ bounded in $H^1(\mathbb{R}^N)$
satisfying for $k\geq k_0$:
\begin{equation*}
\begin{cases}
|u_{n_k}^{(1)}|,\  |u_{n_k}^{(2)}|\leq |u_{n_k}|;\\
\left\|u_{n_k}-(u_{n_k}^{(1)}+u_{n_k}^{(2)})\right\|_p\leq \delta_p(\epsilon)\to 0\ \mathrm{as}\ \epsilon\to 0\  \mathrm{for\ any}\  2\leq p<2N/(N-2)_+;\\
\left|\int_{\mathbb{R}^N}|u_{n_k}^{(1)}|^2dx-\sigma\right|\leq
\epsilon,\
\left|\int_{\mathbb{R}^N}|u_{n_k}^{(2)}|^2dx-(\lambda-\sigma)\right|\leq \epsilon;\\
\mathrm{Supp}\ u_{n_k}^{(1)}\subset B_{2R_1}(y_{n_k});\\
\mathrm{dist}(\mathrm{Supp}\ u_{n_k}^{(1)}, \ \mathrm{Supp}\
u_{n_k}^{(1)})\to \infty\ \mathrm{as}\ k\to \infty;\\
\liminf_{k\to \infty}\int_{\mathbb{R}^N}(|\nabla u_{n_k}|^2-|\nabla
u_{n_k}^{(1)}|^2-|\nabla u_{n_k}^{(2)}|^2)dx\geq 0.
\end{cases}
\end{equation*}
\end{lemma}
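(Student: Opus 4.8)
The plan is to follow the classical argument based on the Lévy concentration functions. For each $n$ I would introduce the nondecreasing function
\[
Q_n(R)=\sup_{y\in\mathbb{R}^N}\int_{B_R(y)}|u_n(x)|^2\,dx,\qquad R\ge 0,
\]
which satisfies $0\le Q_n(R)\le\lambda$ and $\lim_{R\to\infty}Q_n(R)=\lambda$. Since $\{Q_n\}$ is a uniformly bounded sequence of monotone functions, Helly's selection theorem yields a subsequence $\{u_{n_k}\}$ and a nondecreasing limit $Q:[0,\infty)\to[0,\lambda]$ with $Q_{n_k}(R)\to Q(R)$ at every continuity point of $Q$, hence at a dense set of radii, which is all that is needed. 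Setting $\sigma:=\lim_{R\to\infty}Q(R)\in[0,\lambda]$, the proof splits according to whether $\sigma=0$ (vanishing), $\sigma=\lambda$ (compactness), or $0<\sigma<\lambda$ (dichotomy).

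The vanishing case is immediate: $\sigma=0$ forces $Q\equiv0$ by monotonicity and nonnegativity, which is exactly possibility (ii). For the compactness case $\sigma=\lambda$, I would first note that for each large $j$ one can pick $R_j$ with $Q(R_j)>\lambda-1/j$, hence for $k$ large a center $y^{(j)}_{n_k}$ with $\int_{B_{R_j}(y^{(j)}_{n_k})}|u_{n_k}|^2>\lambda-1/j$. The only real point is to replace these $j$-dependent centers by a single sequence $y_{n_k}$, and this follows from an overlapping-balls observation: once $1/j<\lambda/2$, two balls each carrying mass strictly larger than $\lambda/2$ must intersect, so $|y^{(j)}_{n_k}-y^{(1)}_{n_k}|\le R_j+R_1$. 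Enlarging the radius to $2R_j+R_1$ and taking $y_{n_k}:=y^{(1)}_{n_k}$ then yields tightness for every $\epsilon$ with one common sequence of centers, giving possibility (i).

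The dichotomy case $0<\sigma<\lambda$ is where the work lies, and I expect it to be the main obstacle. The idea is to produce a splitting by smooth cutoffs around centers that capture precisely the mass $\sigma$. Fix $\epsilon>0$, choose $R_1=R_1(\epsilon)$ with $Q(R_1)>\sigma-\epsilon$, and centers $y_{n_k}$ with $\int_{B_{R_1}(y_{n_k})}|u_{n_k}|^2\to Q(R_1)$ along the subsequence. Since $Q(R)\to\sigma$ as $R\to\infty$, one can then select a slowly growing radius $R_2^{(k)}\to\infty$ so that the mass in the annulus $B_{R_2^{(k)}}(y_{n_k})\setminus B_{R_1}(y_{n_k})$ stays below $\epsilon$; this is the delicate step, carried out by a diagonal argument exploiting $Q_{n_k}(R)\to Q(R)$ together with the fact that both $Q(R_1)$ and $Q(R_2^{(k)})$ hover near $\sigma$. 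Setting $u^{(1)}_{n_k}=\phi((x-y_{n_k})/R_1)\,u_{n_k}$ and $u^{(2)}_{n_k}=\psi((x-y_{n_k})/R_2^{(k)})\,u_{n_k}$ for standard cutoffs $\phi$ (equal to $1$ on $B_1$, supported in $B_2$) and $\psi$ (vanishing on $B_1$, equal to $1$ outside $B_2$), the pointwise bound $|u^{(1)}_{n_k}|,|u^{(2)}_{n_k}|\le|u_{n_k}|$, the disjoint supports with distance tending to infinity, the $L^2$ mass allocation to $\sigma$ and $\lambda-\sigma$ up to $\epsilon$, and the $L^p$ smallness of the remainder (via the small annular mass interpolated against the uniform $H^1$ bound) all follow directly.

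The remaining gradient inequality is obtained by differentiating the cutoffs: the cross terms and the $|\nabla\phi|^2$, $|\nabla\psi|^2$ contributions are supported in the annulus and bounded by $C(R_1^{-2}+(R_2^{(k)})^{-2})\|u_{n_k}\|_2^2$ plus the small annular energy, so that $\int(|\nabla u_{n_k}|^2-|\nabla u^{(1)}_{n_k}|^2-|\nabla u^{(2)}_{n_k}|^2)\,dx\ge -o(1)$, which yields the last property. Letting $\epsilon\to0$ through a diagonal sequence then closes the dichotomy alternative. The crux throughout is the simultaneous choice of the fixed inner radius $R_1$ and the growing outer radius $R_2^{(k)}$ so that the separating annulus carries negligible mass and negligible kinetic energy; once this is secured, every item in (iii) is a routine consequence of the cutoff construction.
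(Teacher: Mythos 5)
The paper does not prove this lemma at all — it is quoted verbatim from Lions (1984), Lemma III.1 — and your outline (L\'evy concentration function $Q_n$, Helly selection, trichotomy in $\sigma=\lim_{R\to\infty}Q(R)$, the overlapping-balls recentering for compactness, and the two-scale cutoff $\phi(\cdot/R_1)$, $\psi(\cdot/R_2^{(k)})$ across a low-mass separating annulus for dichotomy) is precisely the classical argument of that cited source. Your sketch is essentially correct; the only point to watch is that the term $CR_1^{-2}\|u_{n_k}\|_2^2$ by itself is not small for fixed $\epsilon$, so the gradient estimate must be run through the small \emph{annular} mass (giving $\liminf\geq-\delta(\epsilon)$) exactly as you indicate with the concluding diagonal argument in $\epsilon$.
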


The following well-known Hardy-Littlewood-Sobolev inequality  can be
found in \cite{Lieb-Loss 2001}.

\begin{lemma}\label{lem HLS}
Let $N\geq 1$, $p$, $r>1$ and $0<\beta<N$ with
$1/p+(N-\beta)/N+1/r=2$. Let $u\in L^p(\mathbb{R}^N)$ and $v\in
L^r(\mathbb{R}^N)$. Then there exists a sharp constant
$C(N,\beta,p)$, independent of $u$ and $v$, such that
\begin{equation*}
\left|\int_{\mathbb{R}^N}\int_{\mathbb{R}^N}\frac{u(x)v(y)}{|x-y|^{N-\beta}}dxdy\right|\leq
C(N,\beta,p)\|u\|_p\|v\|_r.
\end{equation*}
If $p=r=\frac{2N}{N+\beta}$, then
\begin{equation*}
C(N,\beta,p)=C_\beta(N)=\pi^{\frac{N-\beta}{2}}\frac{\Gamma(\frac{\beta}{2})}{\Gamma(\frac{N+\beta}{2})}\left\{\frac{\Gamma(\frac{N}{2})}{\Gamma(N)}\right\}^{-\frac{\beta}{N}}.
\end{equation*}
\end{lemma}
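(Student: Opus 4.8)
The plan is to prove the inequality by duality, reducing it to a mapping property of the Riesz potential, and then to obtain the key pointwise control via the Hardy--Littlewood maximal function; the explicit value of the sharp constant in the diagonal case is a separate and considerably harder matter, which I would address by rearrangement together with the conformal invariance of the functional.

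First I would reformulate the statement. Setting $\lambda:=N-\beta\in(0,N)$ and rewriting the scaling hypothesis $1/p+(N-\beta)/N+1/r=2$ in the equivalent form $1/r'=1/p-\beta/N$ (with $1/r'=1-1/r$), the claimed bilinear estimate is, by duality, the same as the boundedness of the Riesz potential
\[
T_\lambda f(x):=\int_{\mathbb{R}^N}\frac{f(y)}{|x-y|^\lambda}\,dy
\]
from $L^p(\mathbb{R}^N)$ into $L^{r'}(\mathbb{R}^N)$. Since both the kernel and the $L^p$ norm are monotone under replacing $f$ by $|f|$, it suffices to treat $f\geq 0$.

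The core of the argument is the pointwise bound
\[
T_\lambda f(x)\lesssim \|f\|_p^{\,p\beta/N}\,(Mf(x))^{\,1-p\beta/N},
\]
where $Mf$ denotes the Hardy--Littlewood maximal function. I would obtain this by splitting the integral at radius $R$ into the regions $|x-y|<R$ and $|x-y|\geq R$. Decomposing the near part into dyadic annuli and using $\beta=N-\lambda>0$ gives a contribution $\lesssim R^\beta Mf(x)$, while Hölder's inequality bounds the far part by $\|f\|_p\,\big\||\cdot|^{-\lambda}\chi_{\{|\cdot|\geq R\}}\big\|_{p'}\lesssim \|f\|_p R^{\beta-N/p}$, the $L^{p'}$ norm being finite precisely because $\beta<N/p$ (equivalently $r'<\infty$). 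Optimizing over $R>0$ balances the two terms and yields the displayed estimate. Raising to the power $r'$ and integrating, the exponents have been arranged so that $r'(1-p\beta/N)=p$, so the maximal inequality $\|Mf\|_p\lesssim\|f\|_p$, valid for $p>1$, gives $\|T_\lambda f\|_{r'}^{r'}\lesssim\|f\|_p^{\,r'p\beta/N}\|f\|_p^{p}$; a short check that $r'p\beta/N+p=r'$ then produces $\|T_\lambda f\|_{r'}\lesssim\|f\|_p$, which is the inequality with a finite (non-sharp) constant.

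The main obstacle is the explicit sharp constant in the diagonal case $p=r=2N/(N+\beta)$. Here I would argue by symmetrization: the Riesz rearrangement inequality shows the bilinear functional does not decrease when $u,v$ are replaced by their symmetric decreasing rearrangements, so an extremizing pair may be taken radial and decreasing, and the symmetry of the diagonal problem lets one take $u=v$. To identify the extremal profile and the constant, I would exploit the conformal invariance of the functional: pulling back through stereographic projection to $S^N$ converts the problem into an equivalent sharp inequality invariant under the conformal group, whose analysis forces the extremizers to be, up to these symmetries, of the form $(1+|x|^2)^{-(N+\beta)/2}$. Substituting this profile and evaluating the resulting Beta and Gamma integrals yields the stated constant $C_\beta(N)$. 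This last step, namely proving existence of extremizers and using conformal symmetry to pin them down, is the genuinely hard part and is exactly the content of Lieb's theorem; as this lemma is only used as a tool in the sequel, I would cite \cite{Lieb-Loss 2001} for the sharp value and rely on the maximal-function argument above for the inequality itself.
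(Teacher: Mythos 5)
Your proposal is correct, but you should know that the paper contains no proof of this lemma at all: it is quoted as a known result, with a bare citation to Lieb--Loss \cite{Lieb-Loss 2001} (Theorem 4.3 there), so your write-up does strictly more than the source. Your actual argument is sound. The duality reduction $1/r'=1/p-\beta/N$ is the right reformulation; the Hedberg-type pointwise bound $T_\lambda f(x)\lesssim \|f\|_p^{p\beta/N}(Mf(x))^{1-p\beta/N}$ via dyadic annuli near the singularity and H\"older at infinity is correct, the far-field $L^{p'}$ norm being finite exactly when $\beta<N/p$, which is equivalent to $r'<\infty$ as you say; and the exponent bookkeeping checks out: $r'(1-p\beta/N)=p$ and $r'p\beta/N+p=r'$ are both restatements of $1/r'=1/p-\beta/N$, and $p>1$ is precisely what licenses the maximal inequality. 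This yields the inequality with a finite constant, which is all the present paper ever needs: the sharp value $C_\beta(N)$ is stated in the lemma but plays no quantitative role downstream (the sharp constants actually used in the critical cases come from Lemmas \ref{lem2.1} and \ref{lem cGN}, not from HLS). For the diagonal sharp constant your instinct is right that this is the genuinely hard part (Lieb's theorem: Riesz rearrangement to reduce to radial decreasing competitors, conformal invariance to force the extremizer $(1+|x|^2)^{-(N+\beta)/2}$), and deferring to \cite{Lieb-Loss 2001} is exactly what the paper does; one can check that substituting $\lambda=N-\beta$ into the Lieb--Loss constant reproduces the displayed $C_\beta(N)$. The only loose stitch in your sketch is the reduction to $u=v$ in the diagonal case, which is not pure symmetry: it follows from the fact that the kernel $|x|^{-(N-\beta)}$ is positive definite, so the bilinear form satisfies a Cauchy--Schwarz inequality; since you cite Lieb for this half anyway, the gap is harmless.
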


\begin{remark}\label{rek1.31}
(1). By the Hardy-Littlewood-Sobolev inequality above, for any $v\in
L^s(\mathbb{R}^N)$ with $s\in(1,\frac{N}{\alpha})$, $I_\alpha\ast
v\in L^{\frac{Ns}{N-\alpha s}}(\mathbb{R}^N)$ and
\begin{equation*}
\|I_\alpha\ast v\|_{L^{\frac{Ns}{N-\alpha s}}}\leq C\|v\|_{L^s},
\end{equation*}
where $C>0$ is a constant depending only on $N,\ \alpha$ and $s$.

(2). By the Hardy-Littlewood-Sobolev inequality above and the
Sobolev embedding theorem, we obtain
\begin{equation}\label{e22.4}
\begin{split}
\int_{\mathbb{R}^N}(I_\beta\ast|u|^p)|u|^pdx\leq
C\left(\int_{\mathbb{R}^N}|u|^{\frac{2Np}{N+\beta}}dx\right)^{1+\beta/N}
\leq C\|u\|_{H^1(\mathbb{R}^N)}^{2p}
\end{split}
\end{equation}
for any $p\in \left[\frac{N+\beta}{N},\frac{N+\beta}{N-2}\right]$ if
$N\geq 3$ and $p\in \left[\frac{N+\beta}{N},+\infty\right)$ if $N=1,
2$, where $C>0$ is a constant depending only on $N,\ \beta$ and $p$.
\end{remark}

The following generalized Gagliardo-Nirenberg inequality for the
convolution problem can be found in \cite{Feng-Yuan 2015} and
\cite{Moroz-Schaftingen JFA 2013}.

\begin{lemma}\label{lem cGN}
Let $N\geq 1$, $0<\beta<N$, $1+\beta/N<p<(N+\beta)/(N-2)_+$, then
\begin{equation*}
\int_{\mathbb{R}^N}(I_\beta\ast|u|^p)|u|^pdx\leq C(\beta,p)\|\nabla
u\|_2^{Np-N-\beta}\|u\|_2^{N+\beta-Np+2p}.
\end{equation*}
The best constant $C(\beta,p)$ is defined by
\begin{equation*}
C(\beta,p)=\frac{2p}{2p-Np+N+\beta}\left(\frac{2p-Np+N+\beta}{Np-N-\beta}\right)^{(Np-N-\beta)/2}\|W_p\|_2^{2-2p},
\end{equation*}
where $W_p$ is defined in Theorem \ref{thm1.3}. In particular,  in
the $L^2$-critical case, i.e., $p=1+\frac{2+\beta}{N}$,
$C(\beta,p)=p\|W_p\|_2^{2-2p}$.
\end{lemma}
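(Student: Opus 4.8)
The plan is to follow Weinstein's variational characterization of the sharp constant. Set $a:=Np-N-\beta$ and $b:=2p-a=N+\beta-Np+2p$, and define the quotient
\[
J(u)=\frac{\|\nabla u\|_2^{a}\,\|u\|_2^{b}}{\int_{\mathbb{R}^N}(I_\beta\ast|u|^p)|u|^p\,dx},
\]
so that the asserted inequality is precisely the statement $\inf_{u\in H^1(\mathbb{R}^N)\setminus\{0\}}J(u)=1/C(\beta,p)$. First I would record the two invariances of $J$: it is unchanged under $u\mapsto\mu u$ because, since $a+b=2p$, numerator and denominator are both $2p$-homogeneous; and it is unchanged under the dilation $u\mapsto u(\lambda\cdot)$ because $\|\nabla u\|_2^2$, $\|u\|_2^2$ and the convolution term scale by $\lambda^{2-N}$, $\lambda^{-N}$, $\lambda^{-N-\beta}$ respectively, and the exponents $a,b$ are arranged so that these factors cancel. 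That $m:=\inf J$ is finite and positive follows by combining the Hardy--Littlewood--Sobolev inequality (Lemma \ref{lem HLS}), which gives $\int(I_\beta\ast|u|^p)|u|^p\,dx\le C\|u\|_{2Np/(N+\beta)}^{2p}$, with the Gagliardo--Nirenberg inequality (Lemma \ref{lem2.1}).

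The core step is to show that $m$ is attained. Using the two scalings I would normalize a minimizing sequence $\{u_n\}$ so that $\|u_n\|_2=\|\nabla u_n\|_2=1$; then $J(u_n)\to m$ is equivalent to $\int(I_\beta\ast|u_n|^p)|u_n|^p\,dx\to 1/m$, and $\{u_n\}$ is bounded in $H^1(\mathbb{R}^N)$ with fixed mass $\|u_n\|_2^2\equiv1$. Applying the concentration compactness principle (Lemma \ref{lem con-com}) to $|u_n|^2$, I would exclude vanishing, which by Lemma \ref{lem HLS} would force $\int(I_\beta\ast|u_n|^p)|u_n|^p\,dx\to0$ and contradict $m<\infty$, and exclude dichotomy, using that under the Lions splitting the nonlocal energy is asymptotically additive (the cross term vanishes as the supports separate since $I_\beta(x-y)\to0$) while a strict subadditivity comparison forbids the energy from splitting. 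This leaves compactness; after a translation and passing to a subsequence, $u_n\to u_*$ strongly in $L^{2Np/(N+\beta)}(\mathbb{R}^N)$ and weakly in $H^1(\mathbb{R}^N)$, and weak lower semicontinuity then gives $J(u_*)\le m$, whence $u_*$ is a nonzero minimizer.

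Next I would compute the Euler--Lagrange equation $\left.\frac{d}{d\epsilon}J(u_*+\epsilon\phi)\right|_{\epsilon=0}=0$, which takes the form $-\theta_1\Delta u_*+\theta_2 u_*=\theta_3(I_\beta\ast|u_*|^p)|u_*|^{p-2}u_*$ with positive constants $\theta_i$ determined by $\|\nabla u_*\|_2$, $\|u_*\|_2$ and the convolution integral. Choosing $\mu,\lambda>0$ suitably, the rescaled function $W(x)=\mu u_*(\lambda x)$ solves $-\Delta W+W=(I_\beta\ast|W|^p)|W|^{p-2}W$ and, being a minimizer of the scale-invariant quotient, is a ground state; by scale invariance $m=J(u_*)=J(W_p)$ with $W_p$ as in Theorem \ref{thm1.3}. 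To evaluate $J(W_p)$ I would use the Nehari identity $\|\nabla W_p\|_2^2+\|W_p\|_2^2=\int(I_\beta\ast|W_p|^p)|W_p|^p\,dx$ together with the Pohozaev identity $\frac{N-2}{2}\|\nabla W_p\|_2^2+\frac{N}{2}\|W_p\|_2^2=\frac{N+\beta}{2p}\int(I_\beta\ast|W_p|^p)|W_p|^p\,dx$. Eliminating the convolution term from these two relations yields the ratio $\|\nabla W_p\|_2^2/\|W_p\|_2^2=a/b$, and substituting this back into $J(W_p)$ gives $m=\frac{b}{2p}(a/b)^{a/2}\|W_p\|_2^{2p-2}$, i.e. $C(\beta,p)=\frac{2p}{b}(b/a)^{a/2}\|W_p\|_2^{2-2p}$, which is the stated value; in the critical case $p=1+(2+\beta)/N$ one has $a=2$ and $b=2p-2$, so the formula collapses to $C(\beta,p)=p\|W_p\|_2^{2-2p}$.

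I expect the compactness step --- the existence of the extremal --- to be the main obstacle. The two scaling symmetries must first be used to normalize the minimizing sequence before Lemma \ref{lem con-com} can be applied, and the genuinely delicate point is the exclusion of dichotomy, which requires a strict subadditivity estimate for the nonlocal convolution energy under the Lions decomposition. Once the extremal is shown to exist and is identified, up to scaling, with the ground state $W_p$, the remaining computation of the constant via the Nehari and Pohozaev identities is purely algebraic.
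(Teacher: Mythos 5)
The paper does not actually prove this lemma; it cites \cite{Feng-Yuan 2015} and \cite{Moroz-Schaftingen JFA 2013}, so there is no in-paper proof to compare against. Your argument is essentially the Weinstein-type variational proof given in those references, and it is correct: the two invariances of $J$ hold with $a=Np-N-\beta$, $b=2p-a$ exactly as you say; vanishing is excluded as you indicate; and the final algebra checks out --- Nehari and Pohozaev combine to give $\|\nabla W_p\|_2^2/\|W_p\|_2^2=a/b$, hence $m=\frac{b}{2p}\left(\frac{a}{b}\right)^{a/2}\|W_p\|_2^{2p-2}$, which inverts to the stated $C(\beta,p)$, and at $p=1+(2+\beta)/N$ one gets $a=2$, $b=2p-2$ and $C(\beta,p)=p\|W_p\|_2^{2-2p}$.

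Two remarks on the compactness step you flag as the main obstacle. First, the route actually taken in the literature (Weinstein's original argument, adapted by Feng--Yuan to the Hartree term) bypasses concentration compactness entirely: replace the normalized minimizing sequence by its Schwarz symmetrization --- the P\'olya--Szeg\H{o} inequality does not increase $\|\nabla u\|_2$, the $L^2$ norm is preserved, and the Riesz rearrangement inequality does not decrease $\int(I_\beta\ast|u|^p)|u|^p\,dx$ --- and then use the compact embedding of radial $H^1$ functions into $L^{2Np/(N+\beta)}(\mathbb{R}^N)$ to pass to the limit. This is simpler and avoids the dichotomy analysis altogether. If you do keep Lemma \ref{lem con-com}, the strict subadditivity you need is available: with $\theta=a/(2p)$, discrete H\"older gives $t^{a/2}\sigma^{b/2}+(1-t)^{a/2}(1-\sigma)^{b/2}\leq\bigl(t^p+(1-t)^p\bigr)^{\theta}\bigl(\sigma^p+(1-\sigma)^p\bigr)^{1-\theta}<1$ for $\sigma\in(0,1)$ and $p>1$, which is what forbids the energy from splitting. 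Second, since the ground state $W_p$ is not known to be unique, you should note (as the paper does in Remark \ref{rmk2.3}) that all ground states share the same $L^2$ norm --- this follows from the same Nehari--Pohozaev identities you use --- since otherwise the quantity $\|W_p\|_2^{2-2p}$ appearing in the sharp constant would not be well defined.
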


\begin{remark}\label{rmk2.3}
Note that $W_p$ may be not unique, but the ground state has the same
$L^2$-norm, see \cite{Feng-Yuan 2015}.
\end{remark}

In the end of this section, we prove the local well-posedness in the
energy space $H^1(\mathbb{R}^N)$ for (\ref{e1.1}) when $q\geq 2$. We
first recall the following result due to Cazenave (see Theorem 4.3.1
in \cite{Cazenave 2003}).

\begin{theorem}\label{thm2.1}
Let $N\geq 1$. Consider the following Cauchy problem
\begin{equation}\label{e2.5}
\begin{cases}
i\partial_tu+\Delta u+g(u)=0, \quad (t,x)\in
\mathbb{R}\times\mathbb{R}^{N},\\
u(0,x)=u_0(x)\in H^1(\mathbb{R}^N),\quad x\in \mathbb{R}^{N}.
\end{cases}
\end{equation}
Let $g=g_1+g_2+\cdots+g_k$ be such that the following assumptions
hold for every $i=1,2,\cdots, k$:

(A1) $g_i\in C(H^1(\mathbb{R}^N),H^{-1}(\mathbb{R}^N))$ and there
exists $G_i\in C^1(H^1(\mathbb{R}^N),\mathbb{R})$ such that
$g_i=G_i'$;

(A2) there exist $r,\rho\in [2,2N/(N-2)_+)$ if $N\geq 2$ ($r,\rho\in
[2,+\infty]$ if $N=1$) such that $g_i\in
C(H^1(\mathbb{R}^N),L^{\rho'}(\mathbb{R}^N))$ and such that for any
$M<\infty$ there exists $C(M)<\infty$ such that
\begin{equation*}
\|g_i(v)-g_i(w)\|_{\rho'}\leq C(M)\|v-w\|_{r}
\end{equation*}
for all $v,w\in H^1(\mathbb{R}^N)$ with
$\|v\|_{H^1(\mathbb{R}^N)}+\|w\|_{H^1(\mathbb{R}^N)}\leq M$;

(A3) for any $v\in H^1(\mathbb{R}^N)$,
$\mathrm{Im}(g_i(v)\bar{v})=0$ a.e. in $\mathbb{R}^N$.\\
Then for any $u_0\in H^1(\mathbb{R}^N)$, there exist $T_*,\ T^{*}\in
(0,\infty]$ and a unique solution $u(t)\in
C((-T_*,T^*),H^1(\mathbb{R}^N))\cap
C^{1}((-T_*,T^*),H^{-1}(\mathbb{R}^N))$ with $u(0)=u_0$ to
(\ref{e2.5}). If $T^*<\infty$ ($T_*<\infty$), then $\lim_{t\uparrow
T^*}\|u(t,x)\|_{H^1(\mathbb{R}^N)}=\infty$
($\lim_{t\downarrow-T_*}\|u(t,x)\|_{H^1(\mathbb{R}^N)}=\infty$).
Moreover, there is conservation of mass and energy:
\begin{equation*}
\|u(t)\|_2^2=\|u_0\|_2^2,\
E_g(u(t)):=\frac{1}{2}\int_{\mathbb{R}^N}|\nabla
u|^2dx+G_1(u)+\cdots+G_k(u)=E_g(u_0)
\end{equation*}
for all $t\in (-T_*,T^*)$.
\end{theorem}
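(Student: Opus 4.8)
The plan is to follow the classical Strichartz-estimate-plus-fixed-point strategy for semilinear Schr\"{o}dinger equations, adapting it to the abstract structure (A1)--(A3). First I would reformulate (\ref{e2.5}) as the Duhamel integral equation
\[
u(t) = e^{it\Delta}u_0 + i\int_0^t e^{i(t-s)\Delta} g(u(s))\,ds,
\]
and seek a fixed point of the associated map $\Phi$. The analytic tools are the Strichartz estimates for the free propagator $e^{it\Delta}$: for each $i$, assumption (A2) supplies exponents $r,\rho_i \in [2,2N/(N-2)_+)$, and I would pair $\rho_i$ with its admissible companion $\gamma_i$ determined by $2/\gamma_i = N(1/2 - 1/\rho_i)$. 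Since $\rho_i < 2N/(N-2)_+$, these pairs are non-endpoint, so the homogeneous and inhomogeneous Strichartz inequalities (and their gradient versions, since $e^{it\Delta}$ commutes with $\nabla$) apply without endpoint difficulties.

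The core existence argument is a contraction on a time interval $(-T,T)$. The essential technical point, and the one I expect to be the main obstacle, is that $g$ is only Lipschitz from $H^1$ into $L^{\rho_i'}$ when measured in the weaker $L^{r}$-type norm of (A2), not Lipschitz in the full $H^1$ metric. I would therefore decouple the two roles of the function space: work on the closed ball $B$ of radius tied to $\|u_0\|_{H^1}$ inside $L^\infty((-T,T);H^1)$ intersected with the relevant Strichartz spaces $L^{\gamma_i}((-T,T);W^{1,\rho_i})$, but equip $B$ with the weaker complete metric built only from the Strichartz norms without derivatives, in which the Lipschitz bound (A2) operates. Using Strichartz together with (A2), a H\"{o}lder estimate in time produces a factor $T^\theta$ with $\theta>0$, so for $T$ small depending only on $\|u_0\|_{H^1}$ the map $\Phi$ is a contraction in this metric and sends $B$ into itself; weak-$*$ lower semicontinuity of the $H^1$ norm then guarantees that the limit retains the $H^1$ bound. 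This yields a unique local solution in the energy space, and since $T$ depends only on $\|u_0\|_{H^1}$, the standard continuation argument gives the maximal interval $(-T_*,T^*)$ and the blow-up alternative.

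It remains to verify the two conservation laws. For mass, I would pair the equation (which holds in $H^{-1}$) with $u$, take the imaginary part, and invoke (A3), $\mathrm{Im}(g_i(u)\bar u)=0$, to conclude $\frac{d}{dt}\|u(t)\|_2^2 = 0$. For energy the obstruction is that $\partial_t u$ lies only in $H^{-1}$, so one cannot directly test the equation against $\partial_t\bar u$; I would instead argue by regularization, approximating $u_0$ by smooth data, establishing conservation of $E_g$ for the smooth solutions via (A1) (which makes each $G_i$ a $C^1$ potential, so that $\frac{d}{dt}G_i(u) = \mathrm{Re}\langle g_i(u),\partial_t u\rangle$), and then passing to the limit using the continuous dependence furnished by the contraction estimate. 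Since this statement is precisely Cazenave's Theorem 4.3.1 in \cite{Cazenave 2003}, I expect the text itself simply to invoke that reference; the sketch above records the argument I would reconstruct.
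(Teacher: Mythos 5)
You are right on the meta-level: the paper contains no proof of this statement at all. It is quoted verbatim as Theorem 4.3.1 of \cite{Cazenave 2003} and simply invoked, exactly as your closing sentence anticipates. Your reconstruction also matches Cazenave's actual argument in its main lines: Duhamel's formula, non-endpoint Strichartz estimates for the admissible pairs attached to the exponents of (A2), a contraction on a ball of $L^\infty((-T,T);H^1)\cap L^{\gamma_i}((-T,T);W^{1,\rho_i})$ equipped with the weaker derivative-free Strichartz metric (completeness of that metric space and retention of the $H^1$ bound coming from weak lower semicontinuity), a local time depending only on $\|u_0\|_{H^1}$, hence the blow-up alternative; and the mass identity via the $H^{-1}$--$H^1$ pairing together with (A3) is sound, since $g_i(u)\in L^{\rho_i'}$ pairs with $u\in L^{\rho_i}$ pointwise.

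The one step that would fail as written is your treatment of energy conservation. You propose to regularize only the initial data and ``establish conservation of $E_g$ for the smooth solutions,'' but under the abstract hypotheses (A1)--(A3) there is no persistence-of-regularity statement: $g$ is controlled only as a map from $H^1$ to $L^{\rho'}$, so a solution emanating from $H^2$ data is not known to be any smoother than a generic $H^1$ solution. Consequently $\partial_t u$ still lies only in $H^{-1}$, and the chain-rule computation $\frac{d}{dt}G_i(u)=\mathrm{Re}\,\langle g_i(u),\partial_t u\rangle$ remains ill-defined, since $g_i(u)\in H^{-1}$ as well; regularizing the data buys you nothing here. The device in \cite{Cazenave 2003} is to regularize the \emph{equation} rather than the data: replace $g$ by smoothed nonlinearities such as $g_m(u)=J_m\,g(J_m u)$ with $J_m=\left(I-\tfrac{1}{m}\Delta\right)^{-1}$, which satisfy (A1)--(A3) with constants uniform in $m$ and map into good enough spaces that the approximate flows are genuinely differentiable in time with values in $H^1$, hence conserve mass and the approximate energy exactly; one then passes to the limit on a uniform existence interval using the contraction estimates, and upgrades the resulting inequalities $E_g(u(t))\le E_g(u_0)$ and $\|u(t)\|_2\le\|u_0\|_2$ to equalities by solving backward from $u(t)$ and invoking uniqueness. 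With that replacement your outline closes; everything else in it is correct.
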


Applying Theorem \ref{thm2.1}, we obtain the local well-posedness in
$H^1(\mathbb{R}^N)$ for (\ref{e1.1}).

\begin{theorem}\label{thm2.2}
 Let $N\geq 1$, $V(x)=\gamma|x|^{-\alpha}$,
$\gamma\in \mathbb{R}$, $0<\alpha<\min\{2,N\}$,  $f(u)$ be one of
Types 1-5, $q\geq 2$. Then Assumption A holds.
\end{theorem}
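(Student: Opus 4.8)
The plan is to derive Assumption A directly from Theorem \ref{thm2.1} by writing the full nonlinearity of (\ref{e1.1}) as $g(u)=V(x)u+f(u)$ and exhibiting a decomposition $g=g_1+\cdots+g_k$ in which every summand satisfies (A1)--(A3). For the potential part I would split, exactly as in the proof of Lemma \ref{lem2.2},
\[
V(x)u=\gamma|x|^{-\alpha}\chi_{B_1(0)}u+\gamma|x|^{-\alpha}\chi_{B_1^c(0)}u,
\]
so that one factor lies in $L^a$ with $a<N/\alpha$ and the other in $L^b$ with $b>N/\alpha$. For the nonlinear part I would use the monomial/convolution splitting dictated by Types 1--5: each power term $|u|^{p-1}u$ and each Choquard term $(I_\beta\ast|u|^q)|u|^{q-2}u$ becomes one summand $g_i$. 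It then remains to check the three hypotheses for each building block and to apply Theorem \ref{thm2.1}.

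For the two potential pieces, (A3) is immediate since $\gamma|x|^{-\alpha}|u|^2$ is real, so $\mathrm{Im}(g_i(u)\bar u)=0$. The associated primitives $G_i(u)=-\tfrac{\gamma}{2}\int|x|^{-\alpha}\chi|u|^2\,dx$ are finite and $C^1$ by (\ref{e2.1}) combined with the Sobolev embedding, which gives (A1); because these terms are linear, (A2) reduces to the single H\"older estimate $\||x|^{-\alpha}\chi(v-w)\|_{\rho'}\leq C\|v-w\|_r$ with $1/\rho'=1/a+1/r$, and the admissible pair $r,\rho\in[2,2N/(N-2)_+)$ can be chosen precisely because $\alpha<\min\{2,N\}$ forces $1/a$ into the required window. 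For the power pieces $|u|^{p-1}u$, the verifications of (A1)--(A3) are exactly the classical ones carried out in \cite{Cazenave 2003}, so I would simply invoke them.

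The substantive work is the Choquard terms $g_i(u)=(I_\beta\ast|u|^q)|u|^{q-2}u$. Again (A3) holds trivially, since $I_\beta\ast|u|^q$ and $|u|^q$ are real, and (A1) follows by taking $G_i(u)=-\tfrac{1}{2q}\int(I_\beta\ast|u|^q)|u|^q\,dx$, whose finiteness and $C^1$ regularity are guaranteed by (\ref{e22.4}). For (A2) I would write the difference as
\[
(I_\beta\ast(|v|^q-|w|^q))|v|^{q-2}v+(I_\beta\ast|w|^q)(|v|^{q-2}v-|w|^{q-2}w),
\]
bound the first summand using Lemma \ref{lem HLS} and Remark \ref{rek1.31} together with $\big||v|^q-|w|^q\big|\leq C(|v|^{q-1}+|w|^{q-1})|v-w|$, and bound the second using $\big||v|^{q-2}v-|w|^{q-2}w\big|\leq C(|v|^{q-2}+|w|^{q-2})|v-w|$, which is where the hypothesis $q\geq 2$ is essential (the exponent $q-2$ must be nonnegative for this Lipschitz-type inequality). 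Distributing the H\"older exponents so that the Hardy--Littlewood--Sobolev pairing and the surviving $L^r$ factor all land in $[2,2N/(N-2)_+)$ is where the range $1+\beta/N<q<(N+\beta)/(N-2)_+$ gets used.

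Finally I would apply Theorem \ref{thm2.1} to produce the solution with the blow-up alternative and the conservation laws, and verify that the conserved quantity $E_g$ coincides with the energy $E$ of Section 1: the sum of the primitives reconstructs $-\tfrac{\gamma}{2}\int|x|^{-\alpha}|u|^2\,dx-\int F(u)\,dx$. The main obstacle is precisely the bookkeeping for step (A2) in the convolution case, where one must select the Lebesgue exponents in the HLS inequality so that the Lipschitz estimate and the admissibility constraint $r,\rho\in[2,2N/(N-2)_+)$ hold simultaneously; this is also the exact point at which $q\geq 2$ enters, explaining the restriction in the statement.
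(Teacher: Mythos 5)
Your proposal is correct and follows essentially the same route as the paper's proof: decompose $g(u)=\gamma|x|^{-\alpha}u+|u|^{p-1}u+(I_\beta\ast|u|^q)|u|^{q-2}u$ into summands, verify (A1)--(A3) for each, and apply Theorem \ref{thm2.1}, with the Choquard term handled by exactly your splitting of the difference, the two pointwise inequalities (the Lipschitz-type bound $||w|^{q-2}w-|v|^{q-2}v|\leq C(|w|^{q-2}+|v|^{q-2})|v-w|$ being where $q\geq 2$ enters, as you say), and the Hardy--Littlewood--Sobolev/H\"older bookkeeping with $\rho=\frac{2Nq}{N+\beta}$. The only cosmetic difference is that the paper dispatches the potential and power terms in one stroke by noting $\gamma|x|^{-\alpha}\in L^r(\mathbb{R}^N)+L^\infty(\mathbb{R}^N)$ for some $r>\max\{1,N/2\}$ and citing Example 3.2.11 of \cite{Cazenave 2003}, whereas you re-verify the potential piece by hand via the $B_1(0)/B_1^c(0)$ splitting, which amounts to the same H\"older estimate.
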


\begin{proof}
Denote $g(u)=\gamma
|x|^{-\alpha}u+|u|^{p-1}u+(I_\beta\ast|u|^q)|u|^{q-2}u:=g_1+g_2+g_3$.
Since $0<\alpha<\min\{2,N\}$, $\gamma |x|^{-\alpha}\in
L^r(\mathbb{R}^N)+L^{\infty}(\mathbb{R}^N)$ for some
$r>\max\{1,N/2\}$. By Example 3.2.11 in \cite{Cazenave 2003}, we
know that $g_1$ and $g_2$ satisfy assumptions (A1)-(A3) in Theorem
\ref{thm2.1}.

Since $2\leq q<2N/(N-2)_+$, it follows from (3.2.15) in
\cite{Cazenave 2003} that
\begin{equation*}
||w|^{q-2}w-|v|^{q-2}v|\leq C(|w|^{q-2}+|v|^{q-2})|v-w|.
\end{equation*}
By the mean value theorem,
\begin{equation*}
||w|^{q}-|v|^{q}|\leq C(|w|^{q-1}+|v|^{q-1})|v-w|.
\end{equation*}
Set $\rho=\frac{2Nq}{N+\alpha}$. By Lemma \ref{lem HLS}, Remark
\ref{rek1.31} and the H\"{o}lder inequality, we know
\begin{equation*}
\begin{split}
&\|g_3(v)-g_3(w)\|_{\rho'}\\
&=\|(I_\beta\ast|w|^q)(|w|^{q-2}w-|v|^{q-2}v)\|_{\rho'}
+\|(I_\beta\ast(|v|^{q-2}v)(|w|^{q}-|v|^{q})\|_{\rho'}\\
&\lesssim
\||w|^q\|_{\frac{2N}{N+\alpha}}\||w|^{q-2}w-|v|^{q-2}v\|_{\frac{q}{q-1}\frac{2N}{N+\alpha}}
+\||v|^{q-2}v\|_{\frac{q}{q-1}\frac{2N}{N+\alpha}}\||w|^q-|v|^q\|_{\frac{2N}{N+\alpha}}\\
&\lesssim
\|w\|_{\frac{2Nq}{N+\alpha}}^q\|(|w|^{q-2}+|v|^{q-2})|w-v|\|_{\frac{q}{q-1}\frac{2N}{N+\alpha}}\\
&\qquad+\|v\|_{\frac{2Nq}{N+\alpha}}^{q-1}\|(|w|^{q-1}+|v|^{q-1})|w-v|\|_{\frac{2N}{N+\alpha}}\\
&\lesssim
\|w\|_{\frac{2Nq}{N+\alpha}}^q(\|w\|_{\frac{2Nq}{N+\alpha}}^{q-2}+\|v\|_{\frac{2Nq}{N+\alpha}}^{q-2})\|w-v\|_{\frac{2Nq}{N+\alpha}}\\
&\qquad+\|v\|_{\frac{2Nq}{N+\alpha}}^{q-1}(\|w\|_{\frac{2Nq}{N+\alpha}}^{q-1}+\|v\|_{\frac{2Nq}{N+\alpha}}^{q-1})\|w-v\|_{\frac{2N}{N+\alpha}},
\end{split}
\end{equation*}
which implies that $g_3$ satisfies (A1)-(A3) in Theorem
\ref{thm2.1}. The proof is complete.
\end{proof}

\section{Proof of the main results}

\setcounter{section}{3} \setcounter{equation}{0}

\textbf{Proof of Theorem \ref{thm1.1}.} We will prove this theorem
in seven steps.

\textbf{Step 1.} We show that $A_\rho>-\infty$, that is, $A_\rho$ is
well defined. For any $u\in M_\rho$, we have $\|u\|_2^2=\rho$, and
by Lemmas \ref{lem2.1} and \ref{lem2.2}, we have for any
$\epsilon>0$,
\begin{equation}\label{e3.1}
\begin{split}
E(u)&=\frac{1}{2}\int_{\mathbb{R}^N}\left(|\nabla
u|^2-\frac{\gamma}{|x|^\alpha}|u|^2\right)dx-\frac{1}{p+1}\int_{\mathbb{R}^N}|u|^{p+1}dx\\
&\geq
\left(\frac{1}{2}-\frac{\epsilon}{2}\right)\int_{\mathbb{R}^N}|\nabla
u|^2dx-\delta(\epsilon,\|u\|_2)\\
&\qquad-\frac{1}{p+1}C_{NG}(p-1)\|u\|_2^{2+(p-1)(2-N)/2}\|\nabla
u\|_2^{(p-1)N/2}.
\end{split}
\end{equation}

For $1<p<1+4/N$, we have $0<(p-1)N/2<2$. Thus, by the Young
inequality, we obtain from (\ref{e3.1}) that
\begin{equation*}
\begin{split}
E(u)&\geq
\left(\frac{1}{2}-\frac{\epsilon}{2}\right)\int_{\mathbb{R}^N}|\nabla
u|^2dx-\delta(\epsilon,\|u\|_2)-\delta_1(\epsilon,\|u\|_2)-\frac{\epsilon}{2}\|\nabla u\|_2^2\\
&=\left(\frac{1}{2}-\epsilon\right)\|\nabla
u\|_2^2-\delta_2(\epsilon,\|u\|_2)\geq-\delta_2(\epsilon,\|u\|_2)>-\infty
\end{split}
\end{equation*}
by choosing $\epsilon=1/4$.

For $p=1+4/N$, we have
\begin{equation*}
\frac{(p-1)N}{2}=2,\ 2+\frac{(p-1)(2-N)}{2}=\frac{4}{N},\
\frac{1}{p+1}=\frac{N}{2N+4}
\end{equation*}
and $C_{GN}(p-1)=(N+2)/N\|Q_p\|_2^{-4/N}$. Thus, we obtain from
(\ref{e3.1}) that
\begin{equation*}
\begin{split}
E(u)&\geq
\left(\frac{1}{2}-\frac{\epsilon}{2}\right)\int_{\mathbb{R}^N}|\nabla
u|^2dx-\delta(\epsilon,\|u\|_2)-\frac{1}{2}\left(\frac{\|u\|_2}{\|Q_p\|_2}\right)^{4/N}\|\nabla
u\|_2^2\\
&=\frac{1}{2}\left(1-\left(\frac{\|u\|_2}{\|Q_p\|_2}\right)^{4/N}-\epsilon\right)\|\nabla
u\|_2^2-\delta(\epsilon,\|u\|_2)>-\infty
\end{split}
\end{equation*}
by choosing $\epsilon>0$ small enough since $\|u\|_2<\|Q_p\|_2$.

\textbf{Step 2.} For any $\rho>0$ there exists $C_1>0$ such that
$A_\rho\leq -C_1<0$. Indeed, for any $\varphi\in M_\rho$ and
$\lambda>0$, we define
$\varphi_\lambda(x)=\lambda^{N/2}\varphi(\lambda x)$. It is easy to
check that $\|\varphi_\lambda\|_2^2=\|\varphi\|_2^2=\rho$ and
\begin{equation*}
\begin{split}
E(\varphi_\lambda)=\frac{\lambda^2}{2}\int_{\mathbb{R}^N}|\nabla
\varphi|^2dx-\frac{\gamma
\lambda^\alpha}{2}\int_{\mathbb{R}^N}\frac{|\varphi|^2}{|x|^\alpha}dx-\frac{\lambda^{(p-1)N/2}}{p+1}\int_{\mathbb{R}^N}|\varphi|^{p+1}dx.
\end{split}
\end{equation*}
For $1<p<1+4/N$, we have $0<(p-1)N/2<2$. So by choosing $\lambda>0$
sufficiently small, we obtain $E(\varphi_\lambda)<0$ for any
$\gamma>0$. For $p=1+4/N$, we have $(p-1)N/2=2$. By choosing
$\varphi$ such that
\begin{equation*}
\frac{1}{2}\int_{\mathbb{R}^N}|\nabla
\varphi|^2dx<\frac{1}{p+1}\int_{\mathbb{R}^N}|\varphi|^{p+1}dx,
\end{equation*}
we have $E(\varphi_\lambda)<0$ for any $\lambda>0$ and $\gamma> 0$.

\medskip

Let $\{u_n\}_{n=1}^{\infty}\subset M_\rho$ be a minimizing sequence
of $E$, that is,
\begin{equation*}
\|u_n\|_2^2=\rho\ \ \mathrm{and}\ \ \lim_{n\to \infty}E(u_n)=A_\rho.
\end{equation*}
Similarly to Step 1, one can show that $\{u_n\}_{n=1}^{\infty}$ is
bounded in $H^1(\mathbb{R}^N)$. Then there exists a subsequence
$\{u_{n_k}\}_{k=1}^{\infty}$ such that one of the three
possibilities in Lemma \ref{lem con-com} holds.

\textbf{Step 3.} The vanishing case in Lemma \ref{lem con-com} does
not occur. Suppose by contradiction that
\begin{equation*}
\lim_{k\to \infty}\sup_{y\in
\mathbb{R}^N}\int_{B_1(y)}|u_{n_k}|^2dx=0.
\end{equation*}
By Lion's lemma, $u_{n_k}\to 0$ in $L^s(\mathbb{R}^N)$ as $k\to
\infty$ for all $s\in (2,2N/(N-2)_+)$. Hence,
\begin{equation*}
\int_{\mathbb{R}^N}|u_{n_k}|^{p+1}dx\to 0\quad \mathrm{and}\quad
\int_{\mathbb{R}^N}\frac{|u_{n_k}|^2}{|x|^\alpha}dx\to 0
\end{equation*}
by (\ref{e2.1}), and thus
\begin{equation*}
\lim_{k\to \infty}E(u_{n_k})=\lim_{k\to \infty}\frac{1}{2}\|\nabla
u_{n_k}\|_2^2\geq 0,
\end{equation*}
which contradicts $A_\rho<0$.

\textbf{Step 4}. We  show that the dichotomy case in Lemma \ref{lem
con-com} does not occur.

Firstly, according to (3) in \cite{Lions 1984}, we know
\begin{equation}\label{e3.6}
A_\rho\leq A_\eta+A_{\rho-\eta}^{\infty}\ \mathrm{for\ any\ }\eta\in
[0,\rho),
\end{equation}
where $A_0=0$,
$A_{\rho-\eta}^{\infty}=\inf_{M_{\rho-\eta}}E^{\infty}(u)$ and
$E^{\infty}(u)=\frac{1}{2}\|\nabla
u\|_2^2-\frac{1}{p+1}\|u\|_{p+1}^{p+1}$.

Next, we claim that
\begin{equation}\label{e3.7}
A_{\theta\eta}\leq \theta A_{\eta}\ \mathrm{for\ any\ }\eta\in
(0,\rho)\ \mathrm{and}\ \theta\in (1,\rho/\eta].
\end{equation}
Indeed, choose a sequence $\{\varphi_n\}_{n=1}^{\infty}\subset
M_{\eta}$ such that $\lim_{n\to \infty}E(\varphi_n)=A_\eta$. Then
$\|\sqrt{\theta}\varphi_n\|_2^2=\theta \eta$ and
\begin{equation*}
\begin{split}
A_{\theta\eta}&\leq \liminf_{n\to
\infty}E(\sqrt{\theta}\varphi_n)\\
&=\liminf_{n\to
\infty}\theta\left(\frac{1}{2}\int_{\mathbb{R}^N}|\nabla
\varphi_n|^2dx-\frac{\gamma}{2}\int_{\mathbb{R}^N}\frac{|\varphi_n|^2}{|x|^\alpha}dx\right)
-\frac{\theta^{\frac{p+1}{2}}}{p+1}\int_{\mathbb{R}^N}|\varphi_n|^{p+1}dx\\
&\leq \liminf_{n\to
\infty}\theta\left(\frac{1}{2}\int_{\mathbb{R}^N}|\nabla
\varphi_n|^2dx-\frac{\gamma}{2}\int_{\mathbb{R}^N}\frac{|\varphi_n|^2}{|x|^\alpha}dx
-\frac{1}{p+1}\int_{\mathbb{R}^N}|\varphi_n|^{p+1}dx\right)\\
&=\theta A_\eta
\end{split}
\end{equation*}
since $\theta>1$ and $p>1$. Hence the claim holds. It follows from
(\ref{e3.6}), (\ref{e3.7}) and Lemma II.1 in \cite{Lions 1984}, we
have
\begin{equation}\label{e3.8}
A_\rho<A_\eta+A_{\rho-\eta}\leq A_\eta+A_{\rho-\eta}^{\infty}\
\mathrm{for \ any\ }\eta\in(0,\rho).
\end{equation}

Finally, suppose by contradiction that (iii) in Lemma \ref{lem
con-com} holds. Denote
\begin{equation*}
\begin{split}
u_{n_k}=u_{n_k}^{(1)}+u_{n_k}^{(2)}+v_{n_k},\
d_{n_k}=\mathrm{dist}\{\mathrm{Supp}\ u_{n_k}^{(1)},\mathrm{Supp}\
u_{n_k}^{(2)}\},\\
\sigma_{n_k}=\int_{\mathbb{R}^N}|u_{n_k}^{(1)}|^2dx\ \ \mathrm{and}\
\  \eta_{n_k}=\int_{\mathbb{R}^N}|u_{n_k}^{(2)}|^2dx.
\end{split}
\end{equation*}
Then we may assume without loss of generality that
\begin{equation*}
d_{n_k}\to \infty,\ \sigma_{n_k}\to \bar{\sigma},\ \eta_{n_k}\to
\bar{\eta}
\end{equation*}
as $k\to \infty$ with $\bar{\sigma},\ \bar{\eta}\in (0,\rho)$,
$|\bar{\sigma}-\sigma|\leq \epsilon$ and
$|\bar{\eta}-(\rho-\sigma)|\leq \epsilon$.\\
By direct calculation, we obtain that
\begin{equation}\label{e3.21}
\begin{split}
\int_{\mathbb{R}^N}\frac{|u_{n_k}|^2}{|x|^\alpha}dx&=\int_{\mathbb{R}^N}\frac{|u_{n_k}^{(1)}+u_{n_k}^{(2)}+v_{n_k}|^2}{|x|^\alpha}dx\\
&=\int_{\mathbb{R}^N}\frac{|u_{n_k}^{(1)}|^2+|u_{n_k}^{(2)}|^2+|v_{n_k}|^2+2\mathrm{Re}(u_{n_k}^{(1)}\bar{v}_{n_k})+2\mathrm{Re}(u_{n_k}^{(2)}\bar{v}_{n_k})}{|x|^\alpha}dx\\
&=\int_{\mathbb{R}^N}\frac{|u_{n_k}^{(1)}|^2+|u_{n_k}^{(2)}|^2}{|x|^\alpha}dx+\delta(\epsilon),
\end{split}
\end{equation}
where $\delta(\epsilon)\to 0$ as $\epsilon\to 0$. If
$\{y_{n_k}\}_{k=1}^{\infty}$ is bounded, in view of (\ref{e2.1}), we
have
\begin{equation}\label{e3.22}
\int_{\mathbb{R}^N}\frac{|u_{n_k}^{(2)}|^2}{|x|^\alpha}dx=\int_{|x-y_{n_k}|\geq
d_{n_k}}\frac{|u_{n_k}^{(2)}|^2}{|x|^\alpha}dx\leq
\int_{|x-y_{n_k}|\geq d_{n_k}}\frac{|u_{n_k}|^2}{|x|^\alpha}dx \to 0
\end{equation}
as $k\to\infty$ since $d_{n_k}\to \infty$. Similarly, if
$\{y_{n_k}\}_{k=1}^{\infty}$ is unbounded, then
\begin{equation}\label{e3.23}
\int_{\mathbb{R}^N}\frac{|u_{n_k}^{(1)}|^2}{|x|^\alpha}dx=\int_{|x-y_{n_k}|\leq
2R_1}\frac{|u_{n_k}^{(1)}|^2}{|x|^\alpha}dx\leq
\int_{|x-y_{n_k}|\leq 2R_1}\frac{|u_{n_k}|^2}{|x|^\alpha}dx\to 0.
\end{equation}
By using the inequality
\begin{equation}\label{e2.4}
\left||\sum_{j=1}^{l}a_j|^r-\sum_{j=1}^{l}|a_j|^r\right|\leq
C\sum_{j\neq k}|a_j||a_k|^{r-1}
\end{equation}
with $r>1$ and $l\geq 2$, and in view of
$u_{n_k}^{(1)}u_{n_k}^{(2)}=0$, we obtain that
\begin{equation*}
\begin{split}
&\left|\int_{\mathbb{R}^N}(|u_{n_k}^{(1)}+u_{n_k}^{(2)}+v_{n_k}|^{p+1}-|u_{n_k}^{(1)}|^{p+1}-|u_{n_k}^{(2)}|^{p+1})dx\right|\\
&\leq \int_{\mathbb{R}^N}(|v_{n_k}|^{p+1}+C(|u_{n_k}^{(1)}||v_{n_k}|^p+|u_{n_k}^{(2)}||v_{n_k}|^p+|u_{n_k}^{(1)}|^p|v_{n_k}|+|u_{n_k}^{(2)}|^p|v_{n_k}|))dx\\
&=\delta(\epsilon),
\end{split}
\end{equation*}
which implies that
\begin{equation}\label{e3.24}
\begin{split}
\int_{\mathbb{R}^N}|u_{n_k}|^{p+1}dx=\int_{\mathbb{R}^N}|u_{n_k}^{(1)}|^{p+1}dx
+\int_{\mathbb{R}^N}|u_{n_k}^{(2)}|^{p+1}dx+\delta(\epsilon)
\end{split}
\end{equation}
for any $k\geq k_0$, where $\delta(\epsilon)\to 0$ as $\epsilon\to
0$. By (\ref{e3.21})-(\ref{e3.24}), we obtain
\begin{equation*}
\begin{split}
A_\rho&=\lim_{k\to\infty}\left(\frac{1}{2}\int_{\mathbb{R}^N}|\nabla
u_{n_k}|^2dx-\frac{\gamma}{2}\int_{\mathbb{R}^N}\frac{|u_{n_k}|^2}{|x|^\alpha}dx-\frac{1}{p+1}\int_{\mathbb{R}^N}|u_{n_k}|^{p+1}dx\right)\\
&\geq \liminf_{k\to
\infty}\left(\frac{1}{2}\int_{\mathbb{R}^N}|\nabla
u_{n_k}^{(1)}|^2dx-\frac{\gamma}{2}\int_{\mathbb{R}^N}\frac{|u_{n_k}^{(1)}|^2}{|x|^\alpha}dx
-\frac{1}{p+1}\int_{\mathbb{R}^N}|u_{n_k}^{(1)}|^{p+1}dx\right)+\delta(\epsilon)\\
&\quad+ \liminf_{k\to
\infty}\left(\frac{1}{2}\int_{\mathbb{R}^N}|\nabla
u_{n_k}^{(2)}|^2dx-\frac{\gamma}{2}\int_{\mathbb{R}^N}\frac{|u_{n_k}^{(2)}|^2}{|x|^\alpha}dx-\frac{1}{p+1}\int_{\mathbb{R}^N}|u_{n_k}^{(2)}|^{p+1}dx\right)\\
&\geq A_{\bar{\sigma}}+A_{\bar{\eta}}^{\infty}+\delta(\epsilon)\ \
\mathrm{or}\ \
A_{\bar{\sigma}}^{\infty}+A_{\bar{\eta}}+\delta(\epsilon).
\end{split}
\end{equation*}
Letting $\epsilon\to 0$, we obtain that $A_\rho\geq
A_{\sigma}+A_{\rho-\sigma}^{\infty}$ or $A_\rho\geq
A_{\sigma}^{\infty}+A_{\rho-\sigma}$, which contradicts
(\ref{e3.8}). Hence, (iii) in Lemma \ref{lem con-com} does not
occur.

\textbf{Step 5.} From Steps 3 and 4 and Lemma \ref{lem con-com},
there exist a subsequence $\{u_{n_k}\}_{k=1}^{\infty}$ and a
sequence $\{y_{n_k}\}_{k=1}^{\infty}\subset \mathbb{R}^N$ such that
for all $\epsilon>0$, there exists $R(\epsilon)>0$ such that for all
$k\geq 1$,
\begin{equation}\label{*}
\int_{B_{R(\epsilon)}(y_{n_k})}|u_{n_k}(x)|^2dx\geq
\lambda-\epsilon.
\end{equation}
Denote $\tilde{u}_{n_k}(\cdot)=u_{n_k}(\cdot+y_{n_k})$. Then there
exists $\tilde{u}$ such that $\tilde{u}_{n_k}\rightharpoonup
\tilde{u}$ weakly in $H^1(\mathbb{R}^N)$, $\tilde{u}_{n_k}\to
\tilde{u}$ strongly in $L_{\mathrm{loc}}^r(\mathbb{R}^N)$ with $r\in
[2,2N/(N-2)_+)$, which combine (\ref{*}) show that
\begin{equation*}
\int_{B_{R(\epsilon)}(0)}|\tilde{u}(x)|^2dx\geq \lambda-\epsilon.
\end{equation*}
Thus $\int_{\mathbb{R}^N}|\tilde{u}(x)|^2dx=\lambda$, i.e.,
$\tilde{u}_{n_k}\to \tilde{u}$ strongly in $L^2(\mathbb{R}^N)$. By
the Gagliardo-Nirenberg inequality, $\tilde{u}_{n_k}\to \tilde{u}$
strongly in $L^s(\mathbb{R}^N)$ for $s\in [2,2N/(N-2)_+)$.

Next, we claim that $\{y_{n_k}\}_{k=1}^{\infty}$ is bounded. If it
were not the case we deduce that
\begin{equation*}
\int_{\mathbb{R}^N}\gamma
|x|^{-\alpha}|u_{n_k}|^2dx=\int_{\mathbb{R}^N}\gamma
|x+y_{n_k}|^{-\alpha}|\tilde{u}_{n_k}(x)|^2dx\to 0
\end{equation*}
as $k\to \infty$. Hence $A_\rho\geq A_\rho^{\infty}$. On the other
hand, we know  that $A_{\rho}^{\infty}$ is attained by a nontrivial
function $v_\rho$. Indeed, Steps 1-4 hold for $\gamma=0$. Hence,
\begin{equation*}
\begin{split}
A_\rho^{\infty}&=\lim_{k\to \infty}E^{\infty}(u_{n_k})=\lim_{k\to \infty}E^{\infty}(\tilde{u}_{n_k})\\
&=\lim_{k\to \infty}\frac{1}{2}\int_{\mathbb{R}^N}|\nabla
\tilde{u}_{n_k}|^2dx-\frac{1}{p+1}\int_{\mathbb{R}^N}|\tilde{u}_{n_k}|^{p+1}dx\\
&\geq \frac{1}{2}\int_{\mathbb{R}^N}|\nabla
\tilde{u}|^2dx-\frac{1}{p+1}\int_{\mathbb{R}^N}|\tilde{u}|^{p+1}dx.
\end{split}
\end{equation*}
By the definition of $A_\rho^{\infty}$, we see that $\tilde{u}$ is a
minimizer of $A_\rho^{\infty}$. Consequently,
\begin{equation*}
A_\rho<A_\rho^{\infty}-\frac{1}{2}\int_{\mathbb{R}^N}\gamma|x|^{-\alpha}|v_\rho|^2dx<A_\rho^{\infty},
\end{equation*}
which contradicts $A_\rho\geq A_{\rho}^{\infty}$. Thus,
$\{y_{n_k}\}_{k=1}^{\infty}$ is bounded. So $u_{n_k}$ converges
strongly to some $u$ in $L^s(\mathbb{R}^N)$ for $s\in
[2,2N/(N-2)_+)$. Hence,
\begin{equation*}
\begin{split}
A_\rho&=\lim_{k\to \infty}E(u_{n_k})\\
&=\lim_{k\to
\infty}\frac{1}{2}\int_{\mathbb{R}^N}|\nabla
u_{n_k}|^2dx-\frac{\gamma}{2}\int_{\mathbb{R}^N}\frac{|u_{n_k}|^2}{|x|^\alpha}dx-\frac{1}{p+1}\int_{\mathbb{R}^N}|u_{n_k}|^{p+1}dx\\
&\geq \frac{1}{2}\int_{\mathbb{R}^N}|\nabla
u|^2dx-\frac{\gamma}{2}\int_{\mathbb{R}^N}\frac{|u|^2}{|x|^\alpha}dx-\frac{1}{p+1}\int_{\mathbb{R}^N}|u|^{p+1}dx.
\end{split}
\end{equation*}
By the definition of $A_\rho$, we see that $u$ is a minimizer of
$A_\rho$,  $\lim_{k\to \infty}\|\nabla u_{n_k}\|_2^2=\|\nabla
u\|_2^2$, and hence $u_{n_k}\to u$ in $H^1(\mathbb{R}^N)$.

\textbf{Step 6.} We show that under Assumption A, if $1<p<1+4/N$ and
$u_0\in H^1(\mathbb{R}^N)$ or $p=1+4/N$ and $\|u_0\|_2<\|Q_p\|_2$,
then the Cauchy problem (\ref{e1.1}) admits a global solution
$u(t)\in C(\mathbb{R},H^1(\mathbb{R}^N))\cap
C^1(\mathbb{R},H^{-1}(\mathbb{R}^N))$ with $u(0)=u_0$.

Indeed, by Assumption A, it suffices to bound $\|\nabla u(t)\|_2$ in
the existence time. By the conversation law, Lemmas \ref{lem2.1} and
\ref{lem2.2}, we know
\begin{equation}\label{e3.9}
\begin{split}
\|\nabla
u(t)\|_2^2&=2E(u(t))+\frac{2}{p+1}\|u(t)\|_{p+1}^{p+1}+\int_{\mathbb{R}^N}\frac{\gamma}{|x|^\alpha}|u(t)|^2dx\\
&\leq 2E(u(0))+\epsilon \|\nabla
u(t)\|_2^2+\delta(\epsilon,\|u(t)\|_2)\\
&\quad+\frac{2}{p+1}C_{NG}(p-1)\|u(t)\|_2^{2+(p-1)(2-N)/2}\|\nabla
u(t)\|_2^{(p-1)N/2}.
\end{split}
\end{equation}
Similarly to Step 1, for $p=1+4/N$, we have
\begin{equation*}
\|\nabla u(t)\|_2^2\leq 2E(u(0))+\epsilon \|\nabla
u(t)\|_2^2+\delta(\epsilon,\|u(t)\|_2)+\left(\frac{\|u(t)\|_2}{\|Q_p\|_2}\right)^{4/N}\|\nabla
u(t)\|_2^2.
\end{equation*}
Since $\|u(t)\|_2=\|u(0)\|_2<\|Q_p\|_2$, by choosing $\epsilon>0$
sufficiently small, the above estimate implies the boundedness of
$\|\nabla u(t)\|_2$. For $1<p<1+4/N$, we have
\begin{equation*}
\|\nabla u(t)\|_2^2\leq 2E(u(0))+\epsilon \|\nabla
u(t)\|_2^2+\delta(\epsilon,\|u(t)\|_2)+\epsilon \|\nabla
u(t)\|_2^2+\delta_1(\epsilon,\|u(t)\|_2)
\end{equation*}
and we arrive at the conclusion.

\textbf{Step 7.} We prove that $G_\rho$ is orbitally stable. Suppose
by contradiction that there exist sequences
$\{u_{0,n}\}_{n=1}^{\infty}\subset H^1(\mathbb{R}^N)$ and
$\{t_{n}\}_{n=1}^{\infty}\subset \mathbb{R}^+$ and a constant
$\epsilon_0>0$ such that for all $n\geq 1$,
\begin{equation}\label{e4.1}
\inf_{v\in G_\rho}\|u_{0,n}-v\|_{H^1(\mathbb{R}^N)}<\frac{1}{n}
\end{equation}
and
\begin{equation}\label{e4.2}
\inf_{v\in G_\rho}\|u_n(t_n)-v\|_{H^1(\mathbb{R}^N)}\geq \epsilon_0,
\end{equation}
where $u_n(t)$ is the solution to (\ref{e1.1}) with initial data
$u_{0,n}$.

We claim that there exists $v\in G_\rho$ such that
\begin{equation*}
\lim_{n\to \infty}\|u_{0,n}-v\|_{H^1(\mathbb{R}^N)}=0.
\end{equation*}
Indeed, by (\ref{e4.1}), there exists $\{v_n\}_{n=1}^{\infty}\subset
G_\rho$ such that
\begin{equation}\label{e4.3}
\|u_{0,n}-v_n\|_{H^1(\mathbb{R}^N)}<\frac{2}{n}.
\end{equation}
That $\{v_n\}_{n=1}^{\infty}\subset G_\rho$ implies that
$\{v_n\}_{n=1}^{\infty}\subset M_\rho$ is a minimizing sequence of
$E$. So by Steps 3-5, there exists $v\in G_\rho$ such that
\begin{equation}\label{e4.4}
\lim_{n\to \infty}\|v_n-v\|_{H^1(\mathbb{R}^N)}=0.
\end{equation}
Then the claim follows from (\ref{e4.3}) and (\ref{e4.4})
immediately. Hence,
\begin{equation*}
\lim_{n\to \infty}\|u_{0,n}\|_2^2=\|v\|_2^2=\rho,\ \ \lim_{n\to
\infty}E(u_{0,n})=E(v)=A_{\rho}.
\end{equation*}
By the conservation of mass and energy, we have
\begin{equation*}
\lim_{n\to \infty}\|u_n(t_n)\|_2^2=\rho,\ \ \lim_{n\to
\infty}E(u_n(t_n))=E(v)=A_{\rho}.
\end{equation*}
Similarly to  Step 1, $\{u_n(t_n)\}_{n=1}^{\infty}$ is bounded in
$H^1(\mathbb{R}^N)$. Set
\begin{equation*}
\tilde{u}_n=\frac{\sqrt{\rho}u_n(t_n)}{\|u_n(t_n)\|_2}.
\end{equation*}
Then $\|\tilde{u}_n\|_2^2=\rho$ and
\begin{equation*}
\begin{split}
E(\tilde{u}_n)&=\left(\frac{\sqrt{\rho}}{\|u_n(t_n)\|_2}\right)^2\left(\frac{1}{2}\int_{\mathbb{R}^N}|\nabla
u_n(t_n)|^2dx-\frac{\gamma}{2}\int_{\mathbb{R}^N}\frac{|u_n(t_n)|^2}{|x|^\alpha}dx\right)\\
&\qquad -\left(\frac{\sqrt{\rho}}{\|u_n(t_n)\|_2}\right)^{p+1}\frac{1}{p+1}\int_{\mathbb{R}^N}|u_n(t_n)|^{p+1}dx\\
&=\frac{\rho}{\|u_n(t_n)\|_2^2}
E(u_n(t_n))\\
&\qquad+\left(\left(\frac{\sqrt{\rho}}{\|u_n(t_n)\|_2}\right)^2-\left(\frac{\sqrt{\rho}}{\|u_n(t_n)\|_2}\right)^{p+1}\right)
\frac{1}{p+1}\int_{\mathbb{R}^N}|u_n(t_n)|^{p+1}dx,
\end{split}
\end{equation*}
which implies that
\begin{equation*}
\lim_{n\to
\infty}E(\tilde{u}_n)=\lim_{n\to\infty}E(u_n(t_n))=A_\rho.
\end{equation*}
Hence, $\{\tilde{u}_n\}_{n=1}^{\infty}\subset M_\rho$ is a
minimizing sequence of $E$, and by Steps 3-5, there exists
$\tilde{v}\in G_\rho$ such that
\begin{equation}\label{e3.25}
\lim_{n\to \infty}\|\tilde{u}_n-\tilde{v}\|_{H^1(\mathbb{R}^N)}=0.
\end{equation}
By the definition of $\tilde{u}_n$, we know
\begin{equation}\label{e3.26}
\lim_{n\to
\infty}\|\tilde{u}_n-u_n(t_n)\|_{H^1(\mathbb{R}^N)}=\lim_{n\to
\infty}\left(1-\frac{\sqrt{\rho}}{\|u_n(t_n)\|_2}\right)\|u_n(t_n)\|_{H^1(\mathbb{R}^N)}=0.
\end{equation}
(\ref{e3.25}) and (\ref{e3.26}) imply that
\begin{equation*}
\lim_{n\to \infty}\|u_n(t_n)-\tilde{v}\|_{H^1(\mathbb{R}^N)}=0,
\end{equation*}
which contradicts (\ref{e4.2}). The proof is complete.

\textbf{Proof of Theorem \ref{thm1.3}.} The proof follows from that
of Theorem \ref{thm1.1} by line to line, so we only point out the
differences.

\textbf{Step 1.} We show that $A_\rho>-\infty$. For any $u\in
M_\rho$, we have $\|u\|_2^2=\rho$, and by Lemmas \ref{lem2.2} and
\ref{lem cGN}, we have for any $\epsilon>0$,
\begin{equation}\label{e33.1}
\begin{split}
E(u)&=\frac{1}{2}\int_{\mathbb{R}^N}\left(|\nabla
u|^2-\frac{\gamma}{|x|^\alpha}|u|^2\right)dx-\frac{1}{2q}\int_{\mathbb{R}^N}(I_\beta\ast|u|^q)|u|^{q}dx\\
&\geq
\left(\frac{1}{2}-\frac{\epsilon}{2}\right)\int_{\mathbb{R}^N}|\nabla
u|^2dx-\delta(\epsilon,\|u\|_2)\\
&\qquad-\frac{1}{2q}C(\beta,q)\|u\|_2^{N+\beta-Nq+2q}\|\nabla
u\|_2^{Nq-N-\beta}.
\end{split}
\end{equation}

For $1+\beta/N<q<1+(\beta+2)/N$, we have $0<Nq-N-\beta<2$. Thus, by
the Young inequality, we obtain from (\ref{e33.1}) that
\begin{equation*}
\begin{split}
E(u)&\geq
\left(\frac{1}{2}-\frac{\epsilon}{2}\right)\int_{\mathbb{R}^N}|\nabla
u|^2dx-\delta(\epsilon,\|u\|_2)-\delta_1(\epsilon,\|u\|_2)-\frac{\epsilon}{2}\|\nabla u\|_2^2\\
&=\left(\frac{1}{2}-\epsilon\right)\|\nabla
u\|_2^2-\delta_2(\epsilon,\|u\|_2)\geq-\delta_2(\epsilon,\|u\|_2)>-\infty
\end{split}
\end{equation*}
by choosing $\epsilon=1/4$.

For $q=1+(\beta+2)/N$, we have $Nq-N-\beta=2$,\
$N+\beta-Nq+2q=(2\beta+4)/N$ and
$C(\beta,q)=q\|W_q\|_2^{2-2q}=q\|W_q\|_2^{-(2\beta+4)/N}$. Thus, we
obtain from (\ref{e33.1}) that
\begin{equation*}
\begin{split}
E(u)&\geq
\left(\frac{1}{2}-\frac{\epsilon}{2}\right)\int_{\mathbb{R}^N}|\nabla
u|^2dx-\delta(\epsilon,\|u\|_2)-\frac{1}{2}\left(\frac{\|u\|_2}{\|W_q\|_2}\right)^{(2\beta+4)/N}\|\nabla
u\|_2^2\\
&=\frac{1}{2}\left(1-\left(\frac{\|u\|_2}{\|W_q\|_2}\right)^{(2\beta+4)/N}-\epsilon\right)\|\nabla
u\|_2^2-\delta(\epsilon,\|u\|_2)>-\infty
\end{split}
\end{equation*}
by choosing $\epsilon>0$ small enough since $\|u\|_2<\|W_q\|_2$.

\textbf{Step 2.} For any $\rho>0$ there exists $C_1>0$ such that
$A_\rho\leq -C_1<0$. Indeed, for any $\varphi\in M_\rho$ and
$\lambda>0$, we define
$\varphi_\lambda(x)=\lambda^{N/2}\varphi(\lambda x)$. Then
$\|\varphi_\lambda\|_2^2=\|\varphi\|_2^2=\rho$ and
\begin{equation*}
\begin{split}
E(\varphi_\lambda)=\frac{\lambda^2}{2}\int_{\mathbb{R}^N}|\nabla
\varphi|^2dx-\frac{\gamma
\lambda^\alpha}{2}\int_{\mathbb{R}^N}\frac{|\varphi|^2}{|x|^\alpha}dx
-\frac{\lambda^{Nq-N-\beta}}{2q}\int_{\mathbb{R}^N}(I_\beta\ast|\varphi|^q)|\varphi|^qdx.
\end{split}
\end{equation*}
Similarly to Step 2 in the proof of Theorem \ref{thm1.1}, for any
$1+\beta/N<q\leq 1+(\beta+2)/N$, we can choose $\varphi$ and
$\lambda$ such that $E(\varphi_\lambda)<0$.

\medskip

Let $\{u_n\}_{n=1}^{\infty}\subset M_\rho$ be a minimizing sequence
of $E$, that is,
\begin{equation*}
\|u_n\|_2^2=\rho\ \ \mathrm{and}\ \ \lim_{n\to \infty}E(u_n)=A_\rho.
\end{equation*}
Then $\{u_n\}_{n=1}^{\infty}$ is bounded in $H^1(\mathbb{R}^N)$ and
there exists a subsequence $\{u_{n_k}\}_{k=1}^{\infty}$ such that
one of the three possibilities in Lemma \ref{lem con-com} holds.

\textbf{Step 3.} The vanishing case in Lemma \ref{lem con-com} does
not occur. Suppose by contradiction that
\begin{equation*}
\lim_{k\to \infty}\sup_{y\in
\mathbb{R}^N}\int_{B_r(y)}|u_{n_k}|^2dx=0.
\end{equation*}
Lion's lemma implies that $u_{n_k}\to 0$ in $L^s(\mathbb{R}^N)$ for
any $s\in (2,2N/(N-2)_+)$. Hence,
\begin{equation*}
\int_{\mathbb{R}^N}(I_\alpha\ast|u_{n_k}|^q)|u_{n_k}|^{q}dx\to
0\quad \mathrm{and}\quad
\int_{\mathbb{R}^N}\frac{|u_{n_k}|^2}{|x|^\alpha}dx\to 0
\end{equation*}
according to (\ref{e2.1}) and (\ref{e22.4}), and then
\begin{equation*}
\lim_{k\to \infty}E(u_{n_k})=\lim_{k\to
\infty}\frac{1}{2}\|u_{n_k}\|_2^2\geq 0,
\end{equation*}
which contradicts $A_\rho<0$.

\textbf{Step 4}. The dichotomy case in Lemma \ref{lem con-com} does
not occur. Similarly,  we have
\begin{equation*}
A_\rho\leq A_\eta+A_{\rho-\eta}^{\infty}\ \mathrm{for\ any\ }\eta\in
[0,\rho)
\end{equation*}
and
\begin{equation*}
A_{\theta\eta}\leq \theta A_{\eta}\ \mathrm{for\ any\ }\eta\in
(0,\rho)\ \mathrm{and}\ \theta\in (1,\rho/\eta],
\end{equation*}
where $A_0=0$,
$A_{\rho-\eta}^{\infty}=\inf_{M_{\rho-\eta}}E^{\infty}(u)$ and
$E^{\infty}(u)=\frac{1}{2}\|\nabla
u\|_2^2-\frac{1}{2q}\int_{\mathbb{R}^N}(I_\beta\ast|u|^q)|u|^qdx$.
Consequently,
\begin{equation}\label{e33.8}
A_\rho<A_\eta+A_{\rho-\eta}\leq A_\eta+A_{\rho-\eta}^{\infty}\
\mathrm{for \ any\ }\eta\in(0,\rho).
\end{equation}

Suppose by contradiction that (iii) in Lemma \ref{lem con-com}
holds. By using (\ref{e2.4}) and $u_{n_k}^{(1)}u_{n_k}^{(2)}=0$, we
obtain that
\begin{equation*}
\begin{split}
&\int_{\mathbb{R}^N}(I_\beta\ast|u_{n_k}|^q)|u_{n_k}|^{q}dx\\
&=\int_{\mathbb{R}^N}(I_\beta\ast|u_{n_k}^{(1)}+u_{n_k}^{(2)}+v_{n_k}|^q)|u_{n_k}^{(1)}+u_{n_k}^{(2)}+v_{n_k}|^{q}dx\\
&=\int_{\mathbb{R}^N}(I_\beta\ast(|u_{n_k}^{(1)}+u_{n_k}^{(2)}|^q+|v_{n_k}|^q+C(|u_{n_k}^{(1)}+u_{n_k}^{(2)}|^{q-1}
|v_{n_k}|+|u_{n_k}^{(1)}+u_{n_k}^{(2)}||v_{n_k}|^{q-1})))\\
&\qquad\times(|u_{n_k}^{(1)}+u_{n_k}^{(2)}|^q+|v_{n_k}|^q+C(|u_{n_k}^{(1)}+u_{n_k}^{(2)}|^{q-1}
|v_{n_k}|+|u_{n_k}^{(1)}+u_{n_k}^{(2)}||v_{n_k}|^{q-1}))dx\\
&=\int_{\mathbb{R}^N}(I_\beta\ast|u_{n_k}^{(1)}+u_{n_k}^{(2)}|^q)|u_{n_k}^{(1)}+u_{n_k}^{(2)}|^{q}dx+\delta(\epsilon)\\
&=\int_{\mathbb{R}^N}(I_\beta\ast|u_{n_k}^{(1)}|^q)|u_{n_k}^{(1)}|^{q}dx
+\int_{\mathbb{R}^N}(I_\beta\ast|u_{n_k}^{(2)}|^q)|u_{n_k}^{(2)}|^{q}dx\\
&\qquad
+2\int_{\mathbb{R}^N}(I_\beta\ast|u_{n_k}^{(1)}|^q)|u_{n_k}^{(2)}|^{q}dx
+\delta(\epsilon),
\end{split}
\end{equation*}
where $\delta(\epsilon)\to 0$ as $\epsilon\to0$. Since $0<\beta<N$
and $1+\beta/N<q<(N+\beta)/(N-2)_+$, we can choose a constant
$\delta>0$ sufficiently small such that
\begin{equation*}
0<\beta+\delta<N\ \ \mathrm{and}\ \ 1+\frac{\beta+\delta}{N}\leq q<
\frac{N+\beta+\delta}{(N-2)_+},
\end{equation*}
which combines with $d_{n_k}=\mathrm{dist}\{\mathrm{Supp}\
{u_{n_k}^{(1)}}, \mathrm{Supp}\ {u_{n_k}^{(2)}}\}\to \infty$ and
(\ref{e22.4}) gives that
\begin{equation*}
\begin{split}
&\int_{\mathbb{R}^N}\int_{\mathbb{R}^N}\frac{|u_{n_k}^{(1)}(x)|^q|u_{n_k}^{(2)}(y)|^q}{|x-y|^{N-\beta}}dxdy\\
&\qquad\leq
\frac{1}{d_{n_k}^\delta}\int_{\mathbb{R}^N}\int_{\mathbb{R}^N}\frac{|u_{n_k}(x)|^q|u_{n_k}(y)|^q}{|x-y|^{N-\beta-\delta}}dxdy\\
&\qquad\leq C
\frac{1}{d_{n_k}^\delta}\left(\int_{\mathbb{R}^N}|u_{n_k}|^{\frac{2Nq}{N+\beta+\delta}}dx\right)^{1+\frac{\beta+\delta}{N}}\\
&\qquad\leq C
\frac{1}{d_{n_k}^\delta}\|u_{n_k}\|_{H^1(\mathbb{R}^N)}^{2q}\to 0
\end{split}
\end{equation*}
as $k\to \infty$. Hence,
\begin{equation*}
\begin{split}
A_\rho&=\lim_{k\to\infty}\left(\frac{1}{2}\int_{\mathbb{R}^N}|\nabla
u_{n_k}|^2dx-\frac{\gamma}{2}\int_{\mathbb{R}^N}\frac{|u_{n_k}|^2}{|x|^\alpha}dx
-\frac{1}{2q}\int_{\mathbb{R}^N}(I_\beta\ast|u_{n_k}|^{q})|u_{n_k}|^{q}dx\right)\\
&\geq \liminf_{k\to
\infty}\left(\frac{1}{2}\int_{\mathbb{R}^N}|\nabla
u_{n_k}^{(1)}|^2dx-\frac{\gamma}{2}\int_{\mathbb{R}^N}\frac{|u_{n_k}^{(1)}|^2}{|x|^\alpha}dx
-\frac{1}{2q}\int_{\mathbb{R}^N}(I_\beta\ast|u_{n_k}^{(1)}|^{q})|u_{n_k}^{(1)}|^{q}dx\right)\\
&\quad+ \liminf_{k\to
\infty}\left(\frac{1}{2}\int_{\mathbb{R}^N}|\nabla
u_{n_k}^{(2)}|^2dx-\frac{\gamma}{2}\int_{\mathbb{R}^N}\frac{|u_{n_k}^{(2)}|^2}{|x|^\alpha}dx
-\frac{1}{2q}\int_{\mathbb{R}^N}(I_\beta\ast|u_{n_k}^{(2)}|^{q})|u_{n_k}^{(2)}|^{q}dx\right)\\
&\quad+\delta(\epsilon)\\
&\geq
A_{\bar{\sigma}}+A_{\bar{\eta}}^{\infty}+\delta(\epsilon)\ \
\mathrm{or}\ \
A_{\bar{\sigma}}^{\infty}+A_{\bar{\eta}}+\delta(\epsilon).
\end{split}
\end{equation*}
Letting $\epsilon\to 0$, we obtain that $A_\rho\geq
A_{\sigma}+A_{\rho-\sigma}^{\infty}$ or $A_\rho\geq
A_{\sigma}^{\infty}+A_{\rho-\sigma}$, which contradicts
(\ref{e33.8}).

\textbf{Step 5.} From Steps 3 and 4 and Lemma \ref{lem con-com},
similarly to Step 5 in the proof of Theorem  \ref{thm1.1}, we know
\begin{equation*}
u_{n_k}\to u\ \mathrm{in}\ L^s(\mathbb{R}^N)\ \mathrm{for}\ s\in
[2,2N/(N-2)_+)
\end{equation*}
and then
\begin{equation*}
\int_{\mathbb{R}^N}(I_\beta\ast|u_{n_k}|^{q})|u_{n_k}|^{q}dx\to\int_{\mathbb{R}^N}(I_\beta\ast|u|^{q})|u|^{q}dx.
\end{equation*}
Hence, $u$ is a minimizer of $A_\rho$ and  $u_{n_k}\to u$ in
$H^1(\mathbb{R}^N)$.

\textbf{Step 6.} Under Assumption A, if $1+\beta/N<q<1+(2+\beta)/N$
and $u_0\in H^1(\mathbb{R}^N)$ or $q=1+(2+\beta)/N$ and
$\|u_0\|_2<\|W_q\|_2$, then the Cauchy problem (\ref{e1.1}) admits a
global solution $u(t)\in C(\mathbb{R},H^1(\mathbb{R}^N))\cap
C^1(\mathbb{R},H^{-1}(\mathbb{R}^N))$ with $u(0)=u_0$.

\textbf{Step 7.} Similarly, $G_\rho$ is orbitally stable.  The proof
is complete.

\bigskip

\textbf{Proof of Theorem \ref{thm1.2}.} It can be done by small
modifications of the proof  of Theorem \ref{thm1.1}, and we omit it.

\textbf{Proof of Theorem \ref{thm1.4}.} It can be done by small
modifications of the proof  of Theorem \ref{thm1.3}, and we omit it.

\textbf{Proof of Theorem \ref{thm1.5}.} It can be done by combining
the proof of Theorems \ref{thm1.1} and  \ref{thm1.3}, and we omit
it.

\medskip

\textbf{Acknowledgements.} The authors would like to express sincere
thanks to the anonymous referee for his or her carefully reading the
manuscript and valuable comments and suggestions. This work was
supported by Tianjin Municipal Education Commission with the Grant
no. 2017KJ173 ``Qualitative studies of solutions for two kinds of
nonlocal elliptic equations".


\begin{thebibliography}{99}
%


\bibitem{Bellazzini-Siciliano 2011} J. Bellazzini,  G. Siciliano,
Scaling properties of functionals and existence of constrained
minimizers, Journal of Functional Analysis, 261(9) (2011),
2486-2507.

\bibitem{Benguria-Jeanneret 1986} R. Benguria,  L. Jeanneret,
Existence and uniqueness of positive solutions of semilinear
elliptic equations with Coulomb potentials on $\mathbb{R}^3$,
Communications in Mathematical Physics, 104(2) (1986), 291-306.

\bibitem{Bensouilah 2018} A. Bensouilah, $L^2$ concentration of blow-up
solutions for the mass-critical NLS with inverse-square potential,
preprint arXiv:1803.05944, (2018).

\bibitem{Bensouilah-Dinh-Zhu 2018} A. Bensouilah, V.D. Dinh, S. Zhu,
 On stability and instability of standing waves for the nonlinear
Schr\"{o}dinger equation with inverse-square potential, Journal of
Mathematical Physics, 59(10) (2018), 101505.

\bibitem{Cazenave-1989} T. Cazenave,   An introduction to nonlinear Schr\"{o}dinger equations,
Textos de Matodos Matematicos, 22 (1989).

\bibitem{Cazenave 2003} T. Cazenave, Semilinear Schr\"{o}dinger Equations,
Courant Lecture Notes in Mathematics, vol.10, New York Uni-versity,
Courant Institute of Mathematical Sciences, American Mathematical
Society, New York, Providence, RI, ISBN0-8218-3399-5, 2003, xiv+323.

\bibitem{Cazenave-Lions 1982} T. Cazenave,  P.L. Lions,  Orbital stability of standing
waves for some nonlinear Schr\"{o}dinger equations, Communications
in Mathematical Physics, 85(4) (1982), 549-561.


\bibitem{Chen-Guo 2007} J. Chen, B. Guo,  Strong instability of standing waves for a
nonlocal Schr\"{o}dinger equation, Phys. D: Nonlinear Phenom., 227
(2007), 142-148.

\bibitem{Cheng-Yang 2018} Z. Cheng,  M.  Yang,  Stability of Standing
Waves for a Generalized Choquard Equation with Potential, Acta
Applicandae Mathematicae, 157(1) (2018), 25-44.

\bibitem{Chiao-Garmire-Townes-1964} R.Y. Chiao, E. Garmire,  C.H. Townes,   Self-trapping of optical
beams, Physical Review Letters, 13(15) (1964), 479-482.

\bibitem{Dinh 2018 power} V.D. Dinh, On nonlinear Schr\"{o}dinger equations
with repulsive inverse-power potentials, arXiv:1812.08405, (2018).


\bibitem{Feng-Yuan 2015} B. Feng, X. Yuan,  On the Cauchy problem for the
Schr\"{o}dinger-Hartree equation, Evol. Equ. Control Theory, 4
(2015), 431-445.

\bibitem{Feng-Zhang 2018}  B. Feng, H. Zhang,  Stability of standing
waves for the fractional Schr\"{o}dinger-Choquard equation,
Computers $\&$ Mathematics with Applications, 75(7) (2018),
2499-2507.

\bibitem{Fukaya-Ohta-2018}  N. Fukaya,  M. Ohta,   Strong instability of standing waves for
nonlinear Schr\"{o}dinger equations with attractive inverse power
potential, arXiv preprint arXiv:1804.02127, (2018).

\bibitem{Fukuizumi 2003}  R. Fukuizumi,  Stability and instability of
standing waves for nonlinear Schr\"{o}dinger equations, Tohoku
Mathematical Publications, 25(25) (2003), 1-68.

\bibitem{Fukuizumi-Ohta-2003} R. Fukuizumi, M. Ohta, Stability of standing waves for nonlinear
Schr\"{o}dinger equations with potentials, Differential Integral
Equations, 16 (2003), 111-128.



\bibitem{Ginibre-Velo 1980} J. Ginibre,  G. Velo,  On a class of non
linear Schr\"{o}dinger equations with non local interaction,
Mathematische Zeitschrift, 170(2) (1980), 109-136.


\bibitem{Guo et al.2018} Q.  Guo,  H. Wang,   X.  Yao, Dynamics of the focusing 3D
cubic NLS with slowly decaying potential. arXiv preprint
arXiv:1811.07578. (2018).

\bibitem{Kikuchi 2007} H. Kikuchi, Existence and stability of standing waves
for Schr\"{o}dinger-Poisson-Slater equation, Adv. Nonlinear Stud.,
7(3) (2007), 403-437.

\bibitem{Lieb-Loss 2001} E.H. Lieb, M. Loss, Analysis, volume 14 of graduate studies in
mathematics, American Mathematical Society, Providence, RI, (4)
2001.


\bibitem{Lieb-Simon 1977} E.H. Lieb, B. Simon,   The hartree-fock
theory for coulomb systems, Communications in Mathematical Physics,
53(3) (1977), 185-194.

\bibitem{Lions 1981}  P.L. Lions,  Some remarks on Hartree equation,
Nonlinear Analysis: Theory, Methods $\&$ Applications, 5(11) (1981),
1245-1256.


\bibitem{Lions 1984} P.L. Lions,  The concentration-compactness principle in the
Calculus of Variations, The locally compact case, part 1, Annales de
l'Institut Henri Poincare (C) Non Linear Analysis, Elsevier Masson,
 1(2) (1984), 109-145.

\bibitem{Liu-Shi 2018} K. Liu,   C. Shi,  Existence of stable
standing waves for the Schr\"{o}dinger-Choquard equation, Boundary
Value Problems, 2018 (2018), 160.


\bibitem{Messiah-1961} A. Messiah, Quantum Mechanics, North Holland, Amsterdam, 1961.

\bibitem{Miao-Zhang-Zheng-2018} C. Miao,  J. Zhang, J. Zheng,   Nonlinear Schr\"{o}dinger equation
with Coulomb potential, arXiv preprint arXiv:1809.06685, (2018).


\bibitem{Moroz-Schaftingen JFA 2013} V. Moroz, J. Van Schaftingen, Groundstates of nonlinear Choquard equations: existence,
qualitative properties and decay asymptotics, Journal of Functional
Analysis, 265(2) (2013), 153-184.


\bibitem{Riesz1949AM} M. Riesz, L'int\'{e}grale de Riemann-Liouville et le probl\`{e}me de Cauchy, Acta Math., 81 (1949), 1-223.


\bibitem{Series-1957} G. Series, Spectrum of Atomic Hydrogen, Oxford University Press,
Oxford, 1957.

\bibitem{Soave 2018} N. Soave,  Normalized ground states for the NLS
equation with combined nonlinearities, arXiv preprint
arXiv:1811.00826, (2018).

\bibitem{Sulem-Sulem 2007} C. Sulem,  P.L. Sulem,  The nonlinear Schr\"{o}dinger equation:
self-focusing and wave collapse (Vol. 139), Springer Science $\&$
Business Media, (2007).

\bibitem{Trachanas-Zographopoulos-2015} P. Trachanas, N.B. Zographopoulos, Orbital stability for the
Schr\"{o}dinger operator involving inverse square potential, J.
Differential Equations, 259 (2015), 4989-5016.

\bibitem{Wang-Sun-Lv 2016} X. Wang,  X. Sun,  W. Lv, Orbital stability of
generalized Choquard equation, Bound. Value Probl., 2016 (2016),
190.

\bibitem{Weinstein 1983} M.I. Weinstein, Nonlinear Schr\"{o}dinger equations and sharp
interpolation estimates, Comm. Math. Phys., 87 (1983), 567-576.

\bibitem{Zakharov-1972} V.E. Zakharov,  Collapse of Langmuir waves, Sov. Phys. JETP,
35(5) (1972), 908-914.



\bibitem{Zhang JSP 2000} J. Zhang, Stability of attractive Bose-Einstein
condensates, Journal of Statistical Physics,  101 (2000), 731-746.

\bibitem{Zhang CPDE 2005} J. Zhang, Sharp threshold for blowup and global existence in nonlinear
Schr\"{o}dinger equations under a harmonic potential, Communications
in Partial Differential Equations,  30(10) (2005), 1429-1443.

\bibitem{Zhu JEE 2017} S.H. Zhu, Existence of stable standing waves for the
fractional Schr\"{o}dinger equations with combined nonlinearities,
Journal of Evolution Equations, 17(3) (2017), 1003-1021.
\end{thebibliography}


\end{document}